\newcommand{\R}{{\mathbb{R}}}
\newcommand{\M}{{\mathcal{M}}}
\newcommand{\CC}{{\mathbb C}}
\newcommand{\Q}{{\mathcal Q}}
\newcommand{\RR}{{\mathbb R}}
\newcommand{\p}{\partial} 
\newcommand{\supp}{\operatorname{supp}}
\renewcommand{\Im}{\mathop{\rm Im}\nolimits}
\newcommand{\lX}{l^1 X}
\newcommand{\lY}{l^1 Y}
\theoremstyle{plain}
\newtheorem{thm}{Theorem}
\newtheorem{prop}{Proposition}[section]
\newtheorem{lemma}[prop]{Lemma}
\theoremstyle{definition}
\numberwithin{equation}{section}
\def\squarebox#1{\hbox to #1{\hfill\vbox to #1{\vfill}}}
\newcommand{\la}{\langle}
\newcommand{\ra}{\rangle}
\newcommand{\uz}{u_0}
\title[Quasilinear Schr\"odinger equations]
{Quasilinear Schr\"odinger equations I:  Small data and quadratic
interactions}
\author[J.L. Marzuola]
{Jeremy L. Marzuola}
\author[J. Metcalfe]
{Jason Metcalfe}
\author[D. Tataru]
{Daniel Tataru}
\address{Department of Mathematics, University of North Carolina-Chapel Hill \\
Phillips Hall, Chapel Hill, NC  27599, USA}
\email{marzuola@email.unc.edu}
\address{Department of Mathematics, University of North Carolina-Chapel Hill \\
Phillips Hall, Chapel Hill, NC  27599, USA}
\email{metcalfe@email.unc.edu}
\address{Mathematics Department, University of California \\
Evans Hall, Berkeley, CA 94720, USA}
\email{tataru@math.berkeley.edu}
\begin{document}

\begin{abstract}
  In this article we prove local well-posedness in low-regularity
  Sobolev spaces for general quasilinear Schr\"odinger
  equations. These results represent improvements in the small data regime of the pioneering
  works by Kenig-Ponce-Vega and Kenig-Ponce-Rolvung-Vega,
% \cite{KPV,KPRV1,KPRV2}, 
where viscosity methods were used to prove existence
of solutions in very high regularity spaces.  Our arguments here are
purely dispersive.  The function spaces in which we show
existence are constructed in ways motivated by the results of
Mizohata, Ichinose, Doi, and others, including
%\cite{Miz1,Miz2} 
the authors.
%\cite{MMT1} 
% where the effects of the linear flow on well-posedness are captured rather
% robustly.  Since the trajectory of the flow is not necessarily known a
% priori for quasilinear
% operators, we instead include a summability condition over cubes in our function
% spaces that serves the same purpose, albeit in a far more restrictive
% sense than is necessary with an explicitly defined flow, see for instance in the work of
% Bejenaru-Tataru.
% % \cite{BT}.
\end{abstract}

\maketitle 

\section{Introduction}

%{\color{red} DON'T FORGET TO TURN OFF SHOWKEYS!}

In this article we consider the local well-posedness for quasilinear
Schr\"odinger equations
\begin{equation}
\label{eqn:quasiquad}
\left\{ \begin{array}{l}
i u_t + g^{jk} (u,\nabla u ) \p_j \p_ku = 
F(u,\nabla u) , \quad u:
\RR \times \RR^d \to \CC^m \\ \\
u(0,x) = u_0 (x)
\end{array} \right. 
\end{equation}
with small initial data in a space with relatively low Sobolev
regularity but with some extra decay assumptions. Here  
\[
g : \CC^m \times (\CC^m)^d \to
\RR^{d \times d}, \qquad 
F: \CC^m \times (\CC^m)^d \to \CC^m
\]
are smooth functions which satisfy
\begin{equation}\label{gF}
g(0) = I_d, \qquad 
F(y,z)= O(|y|^2+|z|^2) \text{ near } (y,z) = (0,0).  
\end{equation}

We also consider a second class of  quasilinear
Schr\"odinger equations 
\begin{equation}
\label{eqn:quasiquad1}
\left\{ \begin{array}{l}
i u_t + \p_j g^{jk} (u)  \p_ku = 
F(u,\nabla u) , \ u:
\RR \times \RR^d \to \CC^m \\ \\
u(0,x) = u_0 (x),
\end{array} \right. 
\end{equation}
with $g$ and $F$ as in \eqref{gF} but where the metric $g$ depends on
$u$ but not on $\nabla u$.  Such an equation is obtained for instance
by differentiating the first equation
\eqref{eqn:quasiquad}. Precisely, if $u$ solves \eqref{eqn:quasiquad}
then the vector $(u,\nabla u)$ solves an equation of the form
\eqref{eqn:quasiquad1}, with a nonlinearity $F$ which depends 
at most quadratically on $\nabla u$.

%{\color{red} fixed a silly typo in 1.3... moved the $d$ to the other $\R$}

We remark that the second order operator in \eqref{eqn:quasiquad1} is
written in divergence form. This is easily achieved by commuting the
first derivative with $g$ and moving the outcome to the right hand
side. However, the second order operator in \eqref{eqn:quasiquad}
cannot be written in divergence form without changing the type
of the equation.

Naively one might at first consider the well-posedness of these
problems in Sobolev spaces $H^s(\R^d)$ with large enough $s$.  This is,
for instance, what is done in the case of quasilinear wave equations,
using energy estimates, Sobolev embeddings and Gr\"onwall's inequality
as in \cite{Hor,Sog}.  However, this cannot work in general for the
above Schr\"odinger equations.  

The obstruction comes from the infinite speed of propagation phenomena.
From \cite{Miz1,Miz2,Miz3,Ich,MMT1}, it is
known that even in the case of linear problems of the form
\begin{equation} \label{lin}
(i \partial_t  +  \Delta_g)  v = A_i(x) \partial_i v,
\end{equation}
a necessary condition for $L^2$ well-posedness is an integrability
condition for the magnetic potential $A$ along the Hamilton flow of
the leading order differential operator.  In the case of
\eqref{eqn:quasiquad}, we would have to look instead at the
corresponding linearized problem, which would exhibit a magnetic
potential of the form $A = A(u,\nabla u)$.  Such a potential in
general does not satisfy Mizohata's integrability condition even if
$A(u) = u$ or $A(u) = \nabla u$ with $u$ solving the linear constant
coefficient Schr\"odinger equation with $H^s$ initial data and $s$
arbitrarily large.

% In addition, it is now well studied that local smoothing properties of
% Schr\"odinger equations are directly related to the global behavior of the
% bicharacteristics of the leading order linear operator, see \cite{CKS,D1,Doi}.

Given the above considerations, it is natural to add some decay to the
$H^s$ Sobolev spaces where the quasilinear problem
\eqref{eqn:quasiquad} is considered. A traditional way to do that is
to use weighted $H^s$ spaces with polynomial weights. This avenue was
pursued for instance in \cite{KPV,KPRV1,KPRV2}, where the first local
well-posedness results for this problem were obtained for solutions in
$H^s \cap L^2 (\langle x \rangle^N)$, where $\langle x
\rangle=(1+|x|^{2})^{\frac{1}{2}}$, for some unspecified sufficiently
large $s$ and $N$ depending upon complicated
asymptotics.

%{\color{red} I fixed a stutter in the previous paragraph }

One disadvantage of the above approach is that the results are not
invariant with respect to translations.  In this article we propose a
different set-up, which is translation invariant.  In the process we
significantly lower the threshold $s$ for local well-posedness, though
the current result only applies for small initial data while the
results of \cite{KPV, KPRV1, KPRV2} permit data of arbitrary size.

Our approach is more reminiscent of the preceding result \cite{KPV2}
which established small data local well-posedness for semilinear derivative
Schr\"odinger equations.  Playing a key role is a variant of the
well-known local smoothing estimates which are described below.
The results of \cite{KPV2} also apply in the case that $\Delta$ is
replaced by 
\[\mathcal{L} = \partial_{x_1}^2 + \dots+\partial_{x_k}^2
- \partial_{x_{k+1}}^2 -\dots - \partial^2_{x_n}.\]
The works \cite{Bej1, Bej} work to lower the regularity required in
order to obtain local well-posedness for small initial data.

The smallness hypothesis on the data was removed in \cite{HO}, \cite{Ch},
\cite{KPV3} for the 1 dimensional case, the elliptic case, and the case where $\Delta$
may be replaced by $\mathcal{L}$ respectively.  And \cite{BT} focuses
on improving the necessary regularity.

While some specific models of quasilinear Schr\"odinger equations were
previously studied, the seminal and benchmark results are
\cite{LimPon} in 1-dimension and \cite{KPV}, \cite{KPRV1, KPRV2} in
general dimension.  The interested reader is referred to the more
thorough histories provided in \cite{KPV} and \cite{LinPon}.

The local smoothing estimates, which were mentioned above, were first
established for the Schr\"odinger equation in \cite{CS}, \cite{Sj},
and \cite{Vega} and were motivated by \cite{Kato}, \cite{KrFa} for the
KdV equation.  In particular, we shall use the observation of
\cite{KPV2} that shows that the inhomogeneous estimates provide twice
the smoothing that is available in the homogeneous case.  In the
presence of asymptotically flat operators, such estimates were first
established in \cite{Doi}, \cite{CKS}.

%{\color{red} the 4 preceding paragraphs are new or greatly modified}

To begin, for each $u$ we denote by $\mathcal{F}u=\hat{u}$ the spatial
Fourier transform of $u$.  We say that the function $u$ is localized
at frequency $2^{i}$ if $\supp \hat{u}(t,\xi)\subset \R\times
[2^{i-1},2^{i+1}]$. Next we introduce a standard Littlewood-Paley
decomposition with respect to spatial frequencies,
\[
1 = \sum_{i=0}^\infty S_i.
\]
Let $\phi_0 : [0,\infty) \to \RR$ be a nonnegative, decreasing, smooth
function such that $\phi_0 (\xi) = 1$ on $[0,1]$ and $\phi_0 (\xi) =
0$ if $\xi \geq 2$.  Then, for each $i \geq 1$ we define $\phi_i :
[0,\infty) \to \RR$ by
$$ \phi_i (\xi) = \phi_0 (2^{-i} \xi) - \phi_0 (2^{-i+1} \xi).$$
We define the operators $S_i$, which localize to frequency
$2^i$, by
\begin{eqnarray*}
 \hat{f}_i (\xi) = \mathcal{F} (S_i f) = \phi_i (\xi) \hat{f} (\xi).
\end{eqnarray*}
We also define the related operators
\begin{eqnarray*}
  S_{\leq N} f = \sum_{i = 0}^N f_i, \ S_{\geq N} f = \sum_{i = N}^\infty f_i.
\end{eqnarray*} 

For each nonnegative integer $j$ we consider a partition $\mathcal{Q}_{j}$ of $\RR^{d}$
into cubes of side length $2^{j}$ and an associated smooth partition of unity
\[
1 = \sum_{Q \in \mathcal{Q}_{j}}  \chi_Q.
\]
Then we define the $l^1_j L^2$ norm by 
\[
\| u\|_{l^1_j L^2} = \sum_{Q \in \mathcal{Q}_{j}} \| \chi_Q u\|_{L^2}.
\]
Our replacement for the $H^s$ initial data space is the space
$l^1 H^s$ with norm given by
\[
\| u\|_{l^1 H^s}^2 = \sum_{j \geq 0} 2^{2sj} \|S_j u\|_{ l^1_j L^2}^2 .
\]
We note that such spaces were previously used in, e.g., \cite{VV}.

%{\color{red} Added the last sentence.  I remain confused by the Bourgain 1992 Israel
%  J. Math. reference.  I've browsed, though likely too tersely, the
%  most obvious candidates and haven't yet found such spaces.}

The motivation for this choice is as follows. Heuristically
Schr\"odinger waves at frequency $2^j$ travel with speed $2^j$. Hence
on the unit time scale a partition on the $2^j$ spatial scale is
exactly at the threshold where it does not interfere with the linear
flow. In other words, the Schr\"odinger evolution in these spaces at
frequency $2^j$ will be no different from the corresponding evolution
in $H^s$. At the same time, the summability condition with respect to
the $2^j$ spatial scale suffices in order to recover Mizohata's condition 
if $s$ is sufficiently large.
 
As a point of reference, in \cite{BT} similar spaces are defined in
the context of semilinear Schr\"odinger equations. There the
trajectories of the Hamilton flow for the principal part are straight
lines, and one sums $\|f\|_{L^{2}(Q)}$ over those $Q \in \mathcal
Q_j$'s which intersect a line $L \subset \RR^d$ and then take a
supremum with respect to all lines $L$. However, such a definition
relies heavily on the Hamilton flow associated with the Laplacian as the
leading order differential operator.  Here, as we are not guaranteed a
nice Hamilton flow of the leading order operator, we simply sum over all
cubes of scale $2^{j}$.

Our main result concerns the quasilinear problem
\eqref{eqn:quasiquad} with small data $u_0 (x) \in l^1 H^s$:

\begin{thm}
\label{thm:main1}
a) Let $s > \frac{d}2+3$. Then there exists  $\epsilon_0 > 0$
sufficiently small  such that, for all initial data $u_0$ with
$\| u_0 \|_{l^1 H^s} \leq \epsilon_0 $,
the equation \eqref{eqn:quasiquad} is locally well-posed in $l^1 H^s
(\RR^d)$ on the time interval $I = [0,1]$.

b) The same result holds for the equation \eqref{eqn:quasiquad1}
with $s > \frac{d}2 + 2$.
\end{thm}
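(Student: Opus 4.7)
The plan is to reduce the quasilinear problem to a linear variable-coefficient Schr\"odinger equation with low-regularity magnetic-type lower order terms, then to solve that linear problem by a paradifferential/iteration scheme built on a matched pair of function spaces tailored to the $l^1 H^s$ structure described in the introduction. Concretely, I would introduce a solution space $l^1 X^s$ in which the linear flow at frequency $2^j$ is measured by an energy component $\|u\|_{L^\infty_t H^s}$ together with a local smoothing component placing $\chi_Q \langle D \rangle^{s+1/2} u$ in $L^2_{t,x}$ on cubes $Q \in \mathcal{Q}_j$ and summing in $l^1$ over $Q$. The dual inhomogeneity space $l^1 Y^s$ would correspondingly gain half a derivative, so that the smoothing estimate in the inhomogeneous case provides the double smoothing of Kenig--Ponce--Vega.

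The main technical input is a linear bound of the form
\[
\|v\|_{l^1 X^s} \lesssim \|v(0)\|_{l^1 H^s} + \|F\|_{l^1 Y^s}
\]
for the paradifferentiated operator
\[
i \p_t v + \p_j g^{jk}(u) \p_k v + A_j(u,\nabla u)\, \p_j v + B(u,\nabla u)\, v = F
\]
with $u$ small in $l^1 H^s$. I would establish this in three steps: (i) a Littlewood--Paley decomposition together with frequency-localized energy estimates in which the metric $g$ is replaced by its truncation $S_{\leq j-4} g$ at frequency $2^j$, with the resulting commutator errors absorbed into the local smoothing component; (ii) a Doi-type positive commutator argument producing the $\langle D \rangle^{1/2}$ local smoothing gain with weights compatible with the $2^j$ cube partition, where the flatness $g(0)=I_d$ and the smallness of $u$ supply the non-trapping hypothesis; and (iii) absorbing the magnetic term $A_j \p_j v$ using a Mizohata-type integral along the Hamilton trajectories, which is precisely the reason for imposing $l^1$ summation over cubes of side $2^j$ at frequency $2^j$.

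Given such a linear theorem, I would run a Picard-style iteration, setting $u^{(0)} = 0$ and defining $u^{(n+1)}$ as the solution of the linearization at $u^{(n)}$. Uniform boundedness in $l^1 X^s$ then follows from the linear bound combined with algebra-type product estimates $\|uv\|_{l^1 H^s} \lesssim \|u\|_{l^1 H^s}\|v\|_{l^1 H^s}$ for $s > d/2$ and Moser-type bounds for the smooth functions $g$ and $F$ under the smallness assumption. Convergence of the iterates is verified in a weaker norm (typically one derivative below), and the usual frequency envelope refinement yields continuous dependence in $l^1 H^s$ and persistence of regularity. The gap between parts (a) and (b) is accounted for at the level of step (iii): in case (b) the metric $g^{jk}(u)$ needs only $u\in C^0$ and the worst magnetic coefficient behaves like $g'(u)\nabla u$, so $s>\tfrac{d}{2}+2$ suffices; in case (a) the metric depends on $\nabla u$, which costs one additional derivative in the coefficients and forces $s>\tfrac{d}{2}+3$.

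The hardest step, I expect, will be (iii): closing the Mizohata/Doi commutator estimate for a genuine paradifferential operator whose magnetic coefficient $A(u,\nabla u)$ has only $H^{s-1}$ or $H^{s-2}$ regularity, while simultaneously extracting the correct $l^1$ summation over dyadic cubes at each frequency. Because the metric varies in space, the Hamilton flow is not straight and the line-partitions of Bejenaru--Tataru from the semilinear setting are unavailable; this is what forces the isotropic cube summation described in the introduction, and matching this geometric bookkeeping with the smoothing-gain weights in a way that is robust under the nonlinear iteration will be the core technical difficulty of the theorem.
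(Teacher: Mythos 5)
Your overall strategy matches the paper closely: paradifferential decomposition with the metric truncated to frequencies below $2^{j-4}$, a positive-commutator (Doi/Morawetz) argument giving a half-derivative local smoothing gain per frequency dyadic block, bilinear and Moser estimates adapted to the $l^1$ cube summation, a Picard iteration with weak convergence in $l^1 H^{s-1}$, and frequency envelopes to close continuity of the data-to-solution map. The discrepancy between the thresholds in parts (a) and (b) is also handled the same way: the paper reduces (a) to (b) by differentiating once so that $(u,\nabla u)$ solves a system of the form \eqref{eqn:quasiquad1}, losing one derivative.

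Where you genuinely diverge is step (iii). You propose to absorb the magnetic term $A_j\partial_j v$ into the linear estimate via a Mizohata-type weight integrated along the Hamilton trajectories of $g$, while simultaneously noting that the Hamilton flow is not controllable and that this is what forces the cube summation — an internal tension. The paper resolves this tension by simply never placing any first-order term inside the frequency-localized linear operator: the paradifferential operator is just $L_j = i\partial_t + \partial_k g^{kl}_{<j-4}\partial_l$, with no magnetic piece, and \emph{all} first-order and quasilinear-error contributions (including $V\nabla u$, $Wu$, $g^{kl}_{>j-4}$, and the paraproduct commutator) are pushed to the right-hand side $f_j$. These are then controlled purely as source terms, using the bilinear $X\cdot X\to Y$ estimates of Proposition~\ref{fe:nonlinear}(b) together with the inhomogeneous double smoothing gain; the $l^1$ summation over $\mathcal Q_j$ is what makes those bilinear bounds summable, which is the functional-analytic surrogate for Mizohata's integrability condition, but no trajectory integral ever appears. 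The positive-commutator argument is correspondingly simpler than you anticipate: one sets $A = -\Delta$ after showing (Lemma in \S\ref{sec:LED}) that the $g-I$ contribution to $[A,\mathcal M]$ is directly absorbable by the bilinear estimates, and then a flat one-dimensional multiplier $m(2^{-l}x_1)\partial_1 + \partial_1 m(2^{-l}x_1)$ per angular wedge suffices. Your route of carrying the magnetic term inside the commutator estimate is not wrong in principle, but it would require more delicate work on the commutator, and the paper's ``everything perturbative on the right-hand side'' approach is cleaner and is precisely what the $l^1$/double-smoothing structure is designed to enable.
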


For comparison purposes we note that the scaling exponent for the
principal part of \eqref{eqn:quasiquad} is $s = \frac{d}2+1$, while
for \eqref{eqn:quasiquad1} with a quadratic nonlinearity in the
gradient $\nabla u$ it is $s = \frac{d}2$.  On the other hand, for the
semilinear version of \eqref{eqn:quasiquad1} the well-posedness result
in $l^1 H^s$ in \cite{Bej}, \cite{BT} applies for $s > \frac{d}2+1$;
that result was shown to be sharp in \cite{Sch}.

We remark that our theorem also holds for the ultrahyperbolic
operators studied in, e.g., \cite{KPRV1, KPRV2}.  Indeed, if $g(0)$ is
of different signature, we need only adjust the local smoothing
estimates of Section \ref{sec:LED}.  The wedge decomposition which is
employed there allows this to be accomplished trivially.

The need to use the $l^1 H^s$ type spaces for the initial data is
exclusively due to the bilinear interactions, both semilinear and
quasilinear. However, we expect these spaces to be relaxed to $H^s$
spaces if all the interactions which are present are cubic and
higher.  Analogs of such observations have appeared previously in
\cite{KPV2, KPV3}.
This problem is considered in a follow-up paper.

%{\color{red} The referee argued that this observation appeared in
%  \cite{KPV}, but I didn't see it there.  It has clearly appeared in
%  the semi-linear work, though and citations added.}

For simplicity the life span of the solutions in the above theorem has
been taken to be $[0,1]$.  However, a simple rescaling argument shows
that the life span can be made arbitrarily large by taking
sufficiently small data. By contrast, the short time large data result
cannot be obtained by scaling from the small data result. This is due
to the fact that the spaces used are inhomogeneous Sobolev spaces, and
spatial localization is not allowed due to the infinite speed of
propagation.  In the large data regime, one must also take into account
the existence of trapping.
This problem will also be considered in subsequent work.

%{\color{red} I expanded slightly in the above}

The  definition of  ``well-posedness''  in the above theorem is taken
to include the following:

\begin{itemize}
\item Existence of a solution $u \in C([0,T_\epsilon);l^1 H^s)$ satisfying
\[
\| u \|_{L^\infty l^1 H^s} \lesssim \epsilon.
\]

\item Uniqueness in the above class provided that $s$ is large 
enough.

\item Continuity of the solution map 
$$l^1 H^s \ni u_0 \to u \in C([0,T_\epsilon);l^1 H^s)$$ 
for all $s$ as in the theorem.
\end{itemize}

The above conditions allow one to interpret the rough solutions as the
unique limits of smooth solutions. However, in the process of proving
the theorem we introduce a stronger topology $\lX^s \subset 
C([0,T_\epsilon);l^1 H^s)$ and, for all $s$ in the theorem, we 
show that the solutions belong to $\lX^s$, are unique in $\lX^s$ 
and that the solution map $u_0 \to u$ is continuous from
$l^1 H^s$ to $\lX^s$.

We also remark that due to the quasilinear character of the problem
the continuous dependence on the initial data is the best one can hope
for. However, if we assume that the metric $g$ does not depend on $u$,
then the problem becomes semilinear and one obtains
Lipschitz dependence on the initial data as in \cite{BT}.

The paper is organized as follows. In Section \ref{sec:boot}, we
describe the space-time function spaces in which we will solve
\eqref{eqn:quasiquad} \eqref{eqn:quasiquad1}.  In Section
\ref{sec:mult}, we establish the necessary multilinear and nonlinear
estimates in order to close the eventual bootstrap estimates.  In
Section \ref{sec:LED}, we prove the necessary Morawetz type estimate
to establish local energy decay for a linear, inhomogeneous
paradifferential version of the Schr\"odinger equation.  Finally, in
Section \ref{sec:proof}, we combine the above estimates with the
proper paradifferential decomposition of the equation in order to
prove Theorem \ref{thm:main1}.

{\sc Acknowledgments.} {The first author was supported in part by an
NSF Postdoctoral Fellowship and wishes to thank the Courant Institute
for generously hosting him during part of the proof of this result.
The second author is supported in part by NSF grants DMS-0800678 and DMS-1054289.  
The third author is supported in part by  NSF grant DMS0354539
as well as by the Miller Foundation.}

\section{Function Spaces and Notations}
\label{sec:boot}

\subsection{ The $l^p_j U$ spaces }
As a generalization of the $l^1_j L^2$ norm defined in the
introduction, given any translation invariant Sobolev type space $U$
we define the Banach spaces $l^p_j U$ with norm
\begin{eqnarray*}
\| u \|_{l^p_j U}^p =  \sum_{Q \in \mathcal{Q}_j} \| \chi_Q u
  \|_U^p  
\end{eqnarray*}
with the obvious changes when $p = \infty$. By a slight abuse we will
employ the same notation whether $U$ represents a space-time Sobolev
space or a purely spatial Sobolev space. Note that in what follows we
will work with inhomogeneous norms, so we take only cubes of size $1$
or larger, i.e. $j \geq 0$.  In particular we will use the dual space
$l^\infty_j L^2$ to $l^1_j L^2$, with norm
\[
\| u\|_{l^\infty_j L^2} = \sup_{Q \in \mathcal{Q}_{j}} \| \chi_Q u\|_{L^2} .
\]

By replacing the sum over $Q$ above with an integral, one can easily
see that these spaces admit a translation invariant equivalent norm.

We also note that the smooth partition of compactly supported cutoffs
in the $l^1_j U$ spaces can be replaced by cutoffs which are frequency
localized.  Indeed, we have that
\begin{eqnarray}
\label{eqn:bernstein}
\sum_{Q\in \Q_j} \|\chi_Q u\|_U \approx \sum_{Q\in \Q_j}
\|(S_{0}\chi_Q) u\|_U.
\end{eqnarray}
This follows simply from the fact that $S_{0}\chi_Q$ decays rapidly
away from $Q$. We will use frequency localized cutoffs whenever we
need the components $\chi_Q u$ to retain the frequency localization of
$u$.

\subsection{ The $X$ and $Y$ spaces}
We next define a local energy type space $X$
of functions on $[0,1] \times \R^d$  with norm
\[
  \| u \|_{X}  =   \sup_{l} \sup_{Q \in \mathcal{Q}_l} 
2^{-\frac{l}{2}} \| u \|_{L^2_{t,x} ([0,1] \times Q)}.
\]

To measure the right hand side of the Schr\"odinger equation we
use a dual local energy space $Y \subset L^2([0,1] \times \R^d)$, which as we
will show satisfies the duality relation $X=Y^*$.  The $X$ spaces are
time-adapted Morrey-Campanato spaces, and for the relation to $Y$,
see, e.g., \cite{BRV}.

%{\color{red} added a blurb to the end of the preceding}

The space $Y$ is an atomic space.  A function $a$ is an atom in $Y$ if
there exists some $j\ge 0$ and some cube $Q \in \mathcal Q_j$ so that
$a$ is supported in $[0,1] \times Q$ and
\[
\| a\|_{L^2([0,1] \times Q)} \lesssim 2^{-\frac{j}2}.
\]
The space $Y$ is the Banach space of linear combinations of the form
\begin{equation}\label{sumofatoms}
f = \sum_k c_k a_k, \qquad  \sum |c_k| < \infty, \qquad a_k \text{ atoms } 
\end{equation}
 with respect to the norm
\[
\| f \|_{Y} = \inf \{\sum |c_k|\,:\, f=\sum_k c_k a_k,\, a_k \text{ atoms} \}.
\]
The core spaces $X$, $Y$ are related via the following duality relation.
\begin{prop}
  The following duality relation holds with respect to the standard $L^2$ 
duality: $Y^* = X$.
\end{prop}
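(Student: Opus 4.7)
The plan is to establish both inclusions $X \hookrightarrow Y^*$ and $Y^* \hookrightarrow X$ under the standard $L^2$ duality. The first direction is essentially Cauchy--Schwarz applied to atoms, while the second requires a localized Riesz representation argument followed by a patching step.

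For $X \hookrightarrow Y^*$, let $u \in X$ and let $a$ be an atom supported in $[0,1] \times Q$ with $Q \in \Q_j$. Cauchy--Schwarz yields
\[
|\langle u, a \rangle_{L^2}| \le \| u \|_{L^2([0,1] \times Q)} \|a\|_{L^2([0,1]\times Q)} \le \bigl(2^{j/2} \|u\|_X\bigr)\bigl(2^{-j/2}\bigr) = \|u\|_X.
\]
For an arbitrary $f = \sum_k c_k a_k \in Y$, the series $\sum_k c_k \langle u, a_k \rangle$ converges absolutely with sum bounded by $\|u\|_X \sum_k |c_k|$; taking the infimum over atomic decompositions gives a well-defined functional of norm at most $\|u\|_X$. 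A small check, using the triangle inequality applied to two competing decompositions of the same $f$, verifies independence of the chosen decomposition.

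For $Y^* \hookrightarrow X$, fix $L \in Y^*$. The key observation is that any $L^2$ function $f$ supported in $[0,1] \times Q$ with $Q \in \Q_j$ is $2^{j/2}\|f\|_{L^2}$ times an atom, so $\|f\|_Y \le 2^{j/2}\|f\|_{L^2}$. Consequently the restriction of $L$ to the Hilbert space $L^2([0,1]\times Q)$ is bounded with norm $\le 2^{j/2} \|L\|_{Y^*}$, and by the Riesz representation theorem is realized by pairing with a unique $u_{j,Q} \in L^2([0,1]\times Q)$ satisfying $\|u_{j,Q}\|_{L^2} \le 2^{j/2}\|L\|_{Y^*}$. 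The uniqueness clause forces these local representers to coincide almost everywhere on overlaps, both within a single scale $j$ and across scales, so they assemble into a single function $u \in L^2_{\loc}([0,1]\times \R^d)$ with $u|_{[0,1]\times Q} = u_{j,Q}$. The local bounds then read $2^{-j/2}\|u\|_{L^2([0,1]\times Q)} \le \|L\|_{Y^*}$ for every $j \ge 0$ and every $Q \in \Q_j$, which is exactly $\|u\|_X \le \|L\|_{Y^*}$. To finish, the functionals $L$ and $\langle u, \cdot \rangle$ both belong to $Y^*$ and agree on every atom (by construction of $u$ on the supporting cube), hence on the dense subspace of finite atomic sums, hence on all of $Y$.

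The main obstacle is the patching step: one must ensure consistency of the locally defined $u_{j,Q}$ simultaneously across all scales $j$ and across all cubes within each scale, and then verify that the resulting globally defined $u$ actually represents $L$ on arbitrary elements of the abstract atomic space $Y$ (not just on $L^2$ functions with single-cube support). Both points are resolved by the almost-everywhere uniqueness of Riesz representers on each $L^2([0,1]\times Q)$ combined with the absolute convergence estimate from the first direction, which allows continuous extension of the $L^2$ pairing from finite atomic sums to all of $Y$.
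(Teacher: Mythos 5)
Your proof is correct and follows essentially the same route as the paper's: Cauchy--Schwarz against atoms for $X\hookrightarrow Y^{*}$, then a per-cube Riesz representation with norm bound $2^{j/2}\|L\|_{Y^{*}}$, patching via uniqueness of the local representers, and reading off the $X$ norm. The only difference is that you spell out two steps the paper leaves implicit—well-definedness of the pairing across competing atomic decompositions of $f$, and the final density argument showing the assembled $u$ represents $L$ on all of $Y$ rather than just on single-cube functions—both of which are harmless and correct to include.
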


\begin{proof}

It is clear by construction that
  \begin{eqnarray*}
    (u,v)_{t,x} \lesssim \| u \|_{X} \| v \|_{Y}.
  \end{eqnarray*}
  Hence, we need to show for any $L \in Y^*$, there exists $u
  \in X$ such that
  \begin{eqnarray*}
    (u,v)_{t,x} = L(v), \quad \| u \|_{X} \leq \| L \|_{Y^*}.
  \end{eqnarray*}
Applying $L$ to all atoms associated to a cube  $Q \in \mathcal{Q}_j$, 
we obtain
\[
|Lv| \lesssim 2^{\frac{j}2} \|L\|_{Y^*} \|v\|_{L^2}
\]
for all functions $v \in L^2$ with support in $Q$.
Hence by Riesz's theorem there exists a function $u_Q$ in $Q$ 
so that
\[
Lv = \langle u_Q,v \rangle, \qquad \|u_Q\|_{L^2} \lesssim  2^{\frac{j}2} \|L\|_{Y^*}.
\]
A priori the functions $u_Q$ depend on $Q$. However, given two intersecting cubes
$Q_1$ and $Q_2$, the actions of $u_{Q_1}$ and $u_{Q_2}$ must coincide 
as $L^2$ functions in $Q_1 \cap Q_2$. Hence we must have $u_{Q_1} = u_{Q_2}$
on $Q_1 \cap Q_2$.  Thus there is a single global function $u$ so that, for 
every cube $Q$,  $u_Q$ is the restriction of $u$ to $Q$. Then the  last estimate
shows that 
\[
\| \chi_Q u\|_{L^2} \lesssim  2^{\frac{j}2} \|L\|_{Y^*}, \qquad Q \in \mathcal{Q}_j
\]   
or equivalently
\[
\|u\|_{X} \lesssim   \|L\|_{Y^*}.
\]

\end{proof}

\subsection{ The $\lX^s$ and $\lY^s$ spaces }

We first remark that the $X$ norm corresponds exactly 
to the local energy decay estimates for $H^{-\frac12}$ 
solutions to the Schr\"odinger equation. Precisely,
in the constant coefficient case we have the following dyadic bound
\[
\| e^{it\Delta} S_j f\|_{X} \lesssim 2^{-\frac{j}2} \|f\|_{L^2}.
\]
Thus for $L^2$ solutions to the linear Schr\"odinger equations
which are localized at frequency $2^j$  it is natural to use 
the space 
\[
X_j = 2^{-\frac{j}2} X \cap L^\infty L^2
\]
with norm
\[
\| u\|_{X_j} = 2^{\frac{j}2} \|u\|_{X} 
+ \|u\|_{L^\infty L^2}.
\]
Adding the $l^1$ spatial summation on the $2^j$ scale
we obtain the space $l^1_j X_j$ with norm
\begin{eqnarray*}
  \| u \|_{l^1_j X_j} = 
\sum_{Q \in \mathcal{Q}_j} \| \chi_Q u \|_{X_j}.
\end{eqnarray*}

Then  we define the space $\lX^s$ where we seek solutions to the
nonlinear Schr\"odinger equations \eqref{eqn:quasiquad},
\eqref{eqn:quasiquad1} with $l^1 H^s$ data by
\begin{eqnarray*}
  \| u \|^2_{\lX^s}  =  \sum_j 2^{2js} \| S_{j} u \|^2_{l^1_j X_j}.
\end{eqnarray*}

%{\color{red} fixed a typo above... the $l^1_j$ was missing in the rhs}

The appropriate space for the inhomogeneous term for $L^2$ solutions
to the Schr\"odinger equation at frequency $2^j$ is 
\[
Y_j = 2^{\frac{j}2} Y + L^1 L^2
\]
with norm
\[
\| f \|_{Y_j} = \inf_{f =  2^{\frac{j}2} f_1 + f_2} \|f_1\|_{Y} + 
\|f_2\|_{L^1 L^2}.
\]
 To fit it to the context in the present paper we add the
$l^1_j$ summation and work with the space $l^1_j Y_j$.   
Finally, we define the space $\lY^s$ with norm
\begin{eqnarray*}
  \| f \|^2_{\lY^s} =  \sum_j  2^{2js} \| S_{j} f \|^2_{l^1_j Y_j} .
\end{eqnarray*}

\subsection{Frequency envelopes}

For both technical and expository reasons it is convenient to present
our bilinear and nonlinear estimates using the method of frequency
envelopes. Given a Sobolev type space $U$ so that
\[
\|u\|_{U}^2 \sim \sum_{k=0}^\infty \|S_k u\|_{U}^2 
\]
a frequency envelope for $u$ in $U$ is a positive sequence $a_j$ so
that
\begin{equation} 
\| S_j u\|_{U} \leq a_j \|u\|_{U}, \qquad 
 \sum a_j^2 \approx 1.
\end{equation}
We say that a frequency envelope is admissible if $a_0\approx 1$ and 
it is slowly varying,
\[
a_j \leq 2^{\delta |j-k|} a_k, \qquad j,k \geq 0 , \qquad 0 < \delta
\ll 1.
\]
An admissible frequency envelope 
always exists, say by 
\begin{equation}\label{freqEnv}
a_j = 2^{-\delta j} + \|u\|^{-1}_{U} \max_k 2^{-\delta |j-k|} \|S_k u\|_{U}.
\end{equation}

In the sequel we will use frequency envelopes for the spaces $l^1
H^s$, $\lX^s$ and $\lY^s$.  The parameter $\delta$ is a 
sufficiently small parameter, which will only depend on the value of $s$
in our main theorem. For instance in the case of part (b) of the theorem,
we will choose $\delta$ so that
\begin{eqnarray*}
0< \delta < s-\frac{d}{2}-2.
\end{eqnarray*}

%%%%%%%%%%%%%%%%%%%%%%%%%%%%%%%%%%%%%%%%%%%%%%%%%%%%%%%%%%%%%%%%%%%%%%%%%%%
%%%%%%%%%%%%%%%%%%%%%%%%%%%%%%%%%%%%%%%%%%%%%%%%%%%%%%%%%%%%%%%%%%%%%%%%%%%

\section{Multilinear and nonlinear estimates}
\label{sec:mult}

In this section we prove the main bilinear and nonlinear estimates
in the paper.  We begin with a shorter proposition containing our bilinear and 
Moser estimates in terms of the $\lX^s$ and $\lY^s$ spaces.

\begin{prop}
\label{prop:nonlinear}
We have the following:

\noindent a) Let $s > \frac{d}2$.  Then the $\lX^s$ spaces satisfy the
algebra property
  \begin{equation}\label{u_squared}
\| u v\|_{\lX^s} \lesssim \|u\|_{\lX^s} \|v\|_{\lX^s}, 
  \end{equation}
as well as the Moser estimate
  \begin{equation}\label{moser}
\| F(u) \|_{\lX^s} \lesssim \|u\|_{\lX^s}(1+\|u\|_{\lX^s}) c(\|u\|_{L^\infty}).  
  \end{equation}
for all smooth $F$ with $F(0)= 0$.

\noindent b) Bilinear $X \cdot X \to Y$ bounds. Let $s >    \frac{d}2+2$. Then 
%   \begin{equation}\label{xxy}
% \| u v\|_{\lY^s} \lesssim \|u\|_{\lX^{s-1}} \|v\|_{\lX^{s-1}},
%   \end{equation}
% respectively
 \begin{equation}\label{xxy2}
\| u v\|_{\lY^\sigma} \lesssim \|u\|_{\lX^{s-1}} \|v\|_{\lX^{\sigma-1}}, \qquad 
\ \ 0 \leq \sigma \leq s,
  \end{equation}
 \begin{equation}\label{xxy1}
\| u v\|_{\lY^\sigma} \lesssim \|u\|_{\lX^{s-2}} \|v\|_{\lX^{\sigma}}, \qquad 
\ \ 0 \leq \sigma \leq s-1.
  \end{equation}
\end{prop}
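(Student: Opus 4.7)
The plan is to prove all four estimates by a Littlewood--Paley paraproduct decomposition combined with frequency envelopes. Given admissible envelopes $(a_j)$, $(b_j)$ for $u$, $v$ in the relevant $\lX$-spaces, I decompose
\[
uv = \pi_{HL}(u,v) + \pi_{LH}(u,v) + \pi_{HH}(u,v)
\]
into low-high, high-low, and diagonal pieces. Three tools recur. First, Bernstein combined with the $l^1_l$-summation yields $\|S_l v\|_{L^\infty_{t,x}} \lesssim 2^{ld/2}\|S_l v\|_{l^1_l L^\infty L^2}$, giving the embedding $\lX^s \hookrightarrow L^\infty_{t,x}$ for $s > \frac{d}{2}$. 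Second, a cube-inclusion monotonicity $\|f\|_{l^1_k L^2}\leq\|f\|_{l^1_j L^2}$ for $k\geq j$ (from $\|\chi_{Q_k} f\|_{L^2}\leq \sum_{Q_j\subset Q_k}\|\chi_{Q_j} f\|_{L^2}$ and reindexing) lets me transfer $l^1$-summability between scales. Third, for part (b), the localized embeddings $\|\chi_{Q_k}f\|_{Y_k}\leq \min(\|f\|_{L^2_{t,x}(Q_k)},\|f\|_{L^1_t L^2_x(Q_k)})$ follow from the two components of $Y_k = 2^{k/2}Y + L^1L^2$ and the atomic bound $\|g\|_Y\leq 2^{k/2}\|g\|_{L^2(Q_k)}$ for $g$ supported in a single cube.

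For part (a), the $\pi_{HL}$ and $\pi_{LH}$ paraproducts are bounded by extracting the low-frequency factor in $L^\infty_{t,x}$ (so that the $l^1_j X_j$ norm passes through as an $L^\infty$-multiplier), with the geometric sum $\sum_l 2^{l(d/2-s)} b_l$ converging under $s > \frac{d}{2}$. For $\pi_{HH}$, whose output frequency $2^k$ may be much smaller than the input $2^j$, I estimate at the higher scale $l^1_j$ and transfer to $l^1_k$ by monotonicity at no cost. The Moser estimate reduces to iterated algebra via the telescoping identity $F(u)=\sum_k m_k S_k u$ with $m_k=\int_0^1 F'(S_{<k}u + tS_k u)\, dt$, smoothness of $F$ producing the factor $c(\|u\|_{L^\infty})$.

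For part (b), the decisive case is $\pi_{HH}$. For two factors at frequency $\sim 2^j$ with $j \geq k - O(1)$, I bound $\|\chi_{Q_j}(S_j u\cdot S_j v)\|_{L^2_{t,x}}$ locally by placing one factor in $L^\infty_{t,x}(Q_j)$ via Bernstein (loss $2^{jd/2}$) and the other in $L^2_{t,x}(Q_j)$, bounded by $\|\cdot\|_{X_j}$. After a Cauchy--Schwarz sum over $Q_j$, the localized $Y_k$-embedding, and the $l^1_j \hookrightarrow l^1_k$ transfer, I obtain
\[
\|S_k(S_j u\cdot S_j v)\|_{l^1_k Y_k} \lesssim 2^{jd/2}\,\|S_j u\|_{l^1_j X_j}\,\|S_j v\|_{l^1_j X_j}.
\]
Inserting the $\lX^{s-1}$, $\lX^{\sigma-1}$ regularity weights, the contribution becomes of order $2^{j(d/2-s-\sigma+2)} a_j b_j$; the geometric sum over $j\geq k$ converges precisely when $s+\sigma > \frac{d}{2} + 2$, which in the extremal case $\sigma = 0$ forces $s > \frac{d}{2} + 2$. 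The $\pi_{HL}$ and $\pi_{LH}$ pieces are handled by combining $L^\infty$-extraction on the low-frequency factor with the $L^1L^2$-component of $Y_k$, refining via the $L^2_{t,x}(Q_k)$-embedding when the regularity balance is tight; the low-frequency summability closes under the same threshold $s > \frac{d}{2} + 2$ for the first inequality and under $s > \frac{d}{2} + 1$ for the second.

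The main difficulty is the $\pi_{HH}$ case in part (b): one must simultaneously exploit the local $L^2_{t,x}$-control supplied by the $X$-norm on cubes of the natural scale $2^j$, the atomic structure of $Y$, and the lossless transfer of $l^1_j$ to $l^1_k$. The threshold $s > \frac{d}{2} + 2$ is dictated precisely by the need to absorb the Bernstein loss $2^{jd/2}$ into the combined regularity weight $2^{-j(s+\sigma-2)}$ from the two $\lX$-factors; the extremal $\sigma = 0$ forces $s - 2 > \frac{d}{2}$.
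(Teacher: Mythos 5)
Your overall strategy (paraproduct decomposition, Bernstein, localized $Y$-embeddings, and a cube-scale transfer) is the same as the paper's, and parts (a) and the low-high/high-low contributions to (b) look fine. The problem is in the decisive high-high case of part (b), exactly where you flag the main difficulty.

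You correctly record the monotonicity $\|f\|_{l^1_k L^2}\le\|f\|_{l^1_j L^2}$ for $k\ge j$ (coarsening the cube partition is lossless), but in the HH case the output frequency $2^k$ is \emph{below} the input frequency $2^j$, so the transfer you need is from $l^1_j$ to $l^1_k$ with $k<j$, i.e.\ refining the partition. That direction is not free: subdividing each $Q_j$ into its $2^{d(j-k)}$ subcubes $Q_k$ and summing costs a Cauchy--Schwarz factor $2^{d(j-k)/2}$, both at the level of $\|f\|_{l^1_k L^2}\lesssim 2^{d(j-k)/2}\|f\|_{l^1_j L^2}$ and in converting $\|\cdot\|_{l^1_j L^2}$ into $\|\cdot\|_{l^1_k Y_k}$ via atoms. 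Combined with your Bernstein at the input frequency $2^j$ (loss $2^{jd/2}$), your path actually yields
\[
\|S_k(S_j u\, S_j v)\|_{l^1_k Y_k}\;\lesssim\;2^{jd/2}\,2^{d(j-k)/2}\,\|S_j u\|_{l^1_j X_j}\|S_j v\|_{l^1_j X_j},
\]
not the estimate with bare $2^{jd/2}$ that you wrote down. Feeding this extra $2^{d(j-k)/2}$ into the $j\ge k$ geometric sum turns the exponent $\tfrac d2-s-\sigma+2$ into $d-s-\sigma+2$, so the threshold you would actually prove is $s+\sigma>d+2$ (i.e.\ $s>d+2$ at $\sigma=0$), well above the claimed $s>\tfrac d2+2$.

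The fix is the one the paper uses: do not place an input factor in $L^\infty_x$ via Bernstein at frequency $2^j$. Instead, first use $\|f\|_{l^1_kY_k}\lesssim\|f\|_{l^1_k L^2}$ (cost-free, atoms at scale $2^k$), pay the unavoidable Cauchy--Schwarz $2^{d(j-k)/2}$ to pass from $l^1_k$ to $l^1_j$, then apply Bernstein to the \emph{output} $S_k(\cdot)$, which lives at frequency $2^k$: $\|S_k(\cdot)\|_{L^2_x}\lesssim 2^{kd/2}\|S_k(\cdot)\|_{L^1_x}$, and finally put the product in $L^1_x$ by Cauchy--Schwarz of the two $L^2_x$ factors (cost-free). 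The total loss is $2^{d(j-k)/2}\cdot 2^{kd/2}=2^{jd/2}$, a factor $2^{d(j-k)/2}$ better than your input-Bernstein route, and precisely this saving is what makes the sum close at $s>\tfrac d2+2$. Concretely: Bernstein must act at the output frequency $2^k$, not the input frequency $2^j$, and the price for refining the cube scale must be kept explicit rather than absorbed into a claimed lossless transfer.
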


The estimates in the above proposition suffice for most of our purposes,
but not all. Instead we need a sharper version of it, which is phrased
in terms of frequency envelopes. Thus Proposition~\ref{prop:nonlinear}
is a direct consequence of the next proposition:

\begin{prop}
\label{fe:nonlinear}
We have the following:

\noindent a) Let $s > \frac{d}2$, and $u,v \in \lX^s$ with admissible
frequency envelopes
$a_k$, respectively $b_k$. Then the $\lX^s$ spaces satisfy the
algebra type property
  \begin{equation}\label{fe:u_squared}
\| S_k (u v)\|_{\lX^s} \lesssim (a_k+b_k)\|u\|_{\lX^s} \|v\|_{\lX^s}, 
  \end{equation}
as well as the Moser type estimate
  \begin{equation}\label{fe:moser}
\|S_k F(u) \|_{\lX^s} \lesssim a_k \|u\|_{\lX^s}(1+\|u\|_{\lX^s}) c(\|u\|_{L^\infty}). 
  \end{equation}
for all smooth $F$ with $F(0)= 0$.

\noindent
b) Bilinear $X \cdot X \to Y$ bounds. Let $s > \frac{d}2+2$, $\sigma \leq s$
and $u \in \lX^s$, $v \in \lX^\sigma$ with admissible frequency envelopes
$a_k$, respectively $b_k$. Then 
\begin{equation}\label{fe:xxy2}
\| S_k(u v)\|_{\lY^\sigma} \lesssim (a_k + b_k)
\|u\|_{\lX^{s-1}} \|v\|_{\lX^{\sigma-1}}, \qquad 
\ \ 0 \leq \sigma \leq s,
\end{equation}
\begin{equation}\label{fe:xxy1}
\|S_k( u v) \|_{\lY^\sigma} \lesssim (a_k + b_k)
\|u\|_{\lX^{s-2}} \|v\|_{\lX^{\sigma}}, \qquad 
\ \ 0 \leq \sigma \leq s-1 ,
\end{equation}
\begin{equation}\label{fe:xxy3}
\| S_k(u S_{\geq k-4}v)\|_{\lY^\sigma} \lesssim (a_k + b_k)
\|u\|_{\lX^{s-2}} \|v\|_{\lX^{\sigma}}, \qquad 
\ \ 0 \leq \sigma \leq s .
\end{equation}

c) Commutator bound. For $s > \frac{d}2+2$ and any multiplier $A \in S^0$ we have
\begin{equation}\label{fe:xxycom}
\| \nabla [S_{< k-4} g,A(D)] \nabla S_k u  \|_{\lY^0} \lesssim 
 \|g - I \|_{\lX^{s}} \|S_k u\|_{\lX^0}.
\end{equation}
% There is no need for $\lX^\sigma$ here since both LHS and RHS are frequency localized 
\end{prop}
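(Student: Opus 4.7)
The plan is to expand the commutator $\nabla[S_{<k-4}g, A(D)]\nabla S_k u$ into a finite sum of bilinear products of low-frequency derivatives of $g$ with pseudodifferential expressions of $S_k u$, and then invoke the bilinear estimates \eqref{fe:xxy1}--\eqref{fe:xxy3} from part (b). First, since $A(D)$ commutes with constants, I would set $h := S_{<k-4}(g - I)$ so that $[S_{<k-4}g, A(D)] = [h, A(D)]$ and $\|h\|_{\lX^s} \lesssim \|g - I\|_{\lX^s}$. The key structural fact is that $h$ is Fourier-supported in $\{|\xi| \lesssim 2^{k-4}\}$ while $\nabla S_k u$ sits at $|\xi| \sim 2^k$, so an integral-kernel/Taylor expansion of $[h, A(D)]$ yields
\begin{equation*}
[h, A(D)] = \sum_{1 \leq |\alpha| \leq N} \frac{(-i)^{|\alpha|}}{\alpha!}\, \partial^\alpha h \cdot (\partial^\alpha_\xi A)(D) + R_N,
\end{equation*}
where $\partial^\alpha_\xi A \in S^{-|\alpha|}$ and $R_N$ is a remainder of order $-(N+1)$ which, applied to the frequency-$2^k$ function $\nabla S_k u$, produces a gain $2^{-(N+1)k}$ and is absorbable for $N$ large relative to $s$.

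Distributing the outer $\nabla$ and the inner $\nabla$ via Leibniz converts each main term into a finite sum of pieces
\begin{equation*}
T_\beta := \partial^\beta h \cdot P_\beta(D) S_k u, \qquad |\beta| \geq 1, \qquad P_\beta \in S^{2-|\beta|}.
\end{equation*}
Since $P_\beta(D) S_k u$ is frequency-localized at $\sim 2^k$ and $\partial^\beta h$ at $\lesssim 2^{k-4}$, the product is essentially $S_k$-localized, and $\|T_\beta\|_{\lY^0} \approx \|S_k T_\beta\|_{\lY^0}$. For $|\beta| \geq 2$ we have $P_\beta \in S^0$, and \eqref{fe:xxy3} (noting $P_\beta(D)S_k u = S_{\geq k-4}[P_\beta(D)S_k u]$) gives
\begin{equation*}
\|T_\beta\|_{\lY^0} \lesssim \|\partial^\beta h\|_{\lX^{s-2}} \|P_\beta(D) S_k u\|_{\lX^0} \lesssim \|h\|_{\lX^s} \|S_k u\|_{\lX^0}.
\end{equation*}
For the delicate case $|\beta| = 1$ with $P_\beta \in S^1$, I would apply \eqref{fe:xxy2} at $\sigma = 0$: the order-one action on frequency-$2^k$ input costs a factor $2^k$, exactly compensated by the $2^{-k}$ shift from $\lX^0$ to $\lX^{-1}$, so $\|P_\beta(D) S_k u\|_{\lX^{-1}} \lesssim \|S_k u\|_{\lX^0}$ and
\begin{equation*}
\|T_\beta\|_{\lY^0} \lesssim \|\partial h\|_{\lX^{s-1}} \|P_\beta(D) S_k u\|_{\lX^{-1}} \lesssim \|h\|_{\lX^s} \|S_k u\|_{\lX^0}.
\end{equation*}

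The main obstacle I expect is the careful bookkeeping of the pseudodifferential expansion, in particular verifying that $R_N$ is genuinely negligible on frequency-$2^k$ inputs. I would sidestep formal symbol calculus by working directly with the kernel: write $[h, A(D)]f(x) = \int K(x-y)(h(x) - h(y)) f(y)\,dy$ and Taylor-expand $h(x) - h(y) = \sum_{1 \leq |\alpha| \leq N} (x-y)^\alpha \partial^\alpha h(y)/\alpha! + \mathrm{rem}_N$. Each factor $(x-y)^\alpha$ converts to a $\xi$-derivative of $K$ (producing the $(\partial_\xi^\alpha A)(D)$ terms above), while the order-$(N+1)$ integral remainder, convolved against the frequency-$2^k$ input $\nabla S_k u$, gains $2^{-(N+1)k}$; for $N$ exceeding a constant depending on $s$ this remainder is dominated by the main terms already estimated.
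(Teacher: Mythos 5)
Your proposal addresses only part (c) of the proposition. Parts (a) and (b) --- the algebra and Moser estimates in $\lX^s$ and the bilinear $X\cdot X\to Y$ bounds --- are not touched, and these make up the bulk of the work in the paper's proof (a Littlewood--Paley dichotomy with Bernstein at the low frequency for (a), a continuous Littlewood--Paley/paradifferential expansion of $F(u)$ in the style of the wave-maps Moser estimate, and a separate high--low/high--high case analysis with the atomic $Y$ structure for (b)). Treating part (b) as given, as you do, is circular if the task is to prove the whole statement; at minimum the review should flag that (a) and (b) are missing.

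For part (c), your route is genuinely different from the paper's but is workable. The paper expands the commutator only to first order: writing out the kernels and using the fundamental theorem of calculus on the low-frequency factor, it produces the \emph{exact} representation $\nabla[S_{<k-4}g,A(D)]\nabla S_k u = L(\nabla S_{<k-4}g,\nabla S_k u)$ where $L$ is a translation-averaged bilinear operator with $L^1$ kernel (``disposable''). Since the $\lX$ and $\lY$ norms are translation-invariant, the commutator bound then follows from a single application of \eqref{fe:xxy2} at $\sigma=0$, with no remainder to handle. Your higher-order pseudodifferential expansion recovers the same $\partial g\cdot\nabla S_k u$ leading behavior, and the $|\beta|=1$ term is handled exactly as the paper would, but the price is that you must control an infinite tail of terms and a Taylor remainder $R_N$, which the paper's one-step representation avoids entirely.

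Two concrete soft spots in the $|\beta|\ge 2$ and remainder bookkeeping. First, the bound $\|\partial^\beta h\|_{\lX^{s-2}}\lesssim\|h\|_{\lX^s}$ is false for $|\beta|>2$: since $h=S_{<k-4}(g-I)$ one has $\|\partial^\beta h\|_{\lX^{s-2}}\lesssim 2^{(|\beta|-2)(k-4)}\|h\|_{\lX^s}$, which grows with $k$. This must be compensated by actually using $P_\beta\in S^{2-|\beta|}$ (not merely $S^0$) so that $\|P_\beta(D)S_k u\|_{\lX^0}\lesssim 2^{(2-|\beta|)k}\|S_k u\|_{\lX^0}$; the net factor $2^{-4(|\beta|-2)}$ then decays, but as written your chain of inequalities loses this cancellation. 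Second, the claim that $R_N$ gains $2^{-(N+1)k}$ is not right once one accounts for the $\partial^\alpha h$ loss with $|\alpha|=N+1$; the net gain is only $\sim 2^{-4(N+1)}$, independent of $k$. That is still geometrically decaying and absorbable for $N$ large, but the dependence stated (``$N$ large relative to $s$'') is not the correct bookkeeping, and the remainder should really be estimated directly from its kernel rather than via the bilinear lemmas. These are fixable, but the first-order disposable-operator representation in the paper is cleaner precisely because it sidesteps both issues.
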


\begin{proof}
 A preliminary step in the proof is to observe that we 
have a Bernstein type inequality,
\[
\| S_k u\|_{l^1_k L^\infty} \lesssim 2^{\frac{dk}2} \| S_k u\|_{l^1_k L^\infty L^2} 
\lesssim 2^{\frac{dk}{2}}\| S_k u\|_{l^1_k X_k}.
\]
This is easily proved using the classical Bernstein inequality, 
with frequency localized cube cutoffs.
% \begin{equation}\label{eqn:bern1}
%   \|S_j u\|_{l^1_j L^q} \lesssim
%   2^{j\Bigl(\frac{d}{p}-\frac{d}{q}\Bigr)} \|S_j u\|_{l^1_jL^p}
% \end{equation}
%for $1\le p\le q\le \infty$.  
After dyadic summation this gives
\begin{equation}\label{eqn:bern}
 \|u\|_{L^\infty} \lesssim \|u\|_{\lX^s}, \qquad s>d/2,
\end{equation}
respectively
\begin{equation}\label{eqn:bern2}
 \|S_{<j} u\|_{l^1_j L^\infty} \lesssim \|u\|_{\lX^s}, \qquad s>d/2.
\end{equation}

To prove the $X$ algebra property we consider 
the usual Littlewood-Paley dichotomy. In a dyadic expression
$S_k( S_i u S_j v)$ we need to consider two cases: 

{ \em High-low interactions:}
$j < i-4$ and $|i-k| < 4$ (or the symmetric alternative). Then 
the $l^1_k X_k$ and $l^1_i X_i$ norms are comparable therefore
we have
\[
\| S_i u  S_j v\|_{l^1_k X_k} \lesssim \| S_i u\|_{l^1_i X_i}
\|  S_j v\|_{L^\infty} \lesssim  2^{\frac{dj}2} \| S_i u\|_{l^1_i X_i} 
\| S_j v\|_{L^\infty L^2}.
\]
The multiplier $S_k$ is bounded in $l^1_k X_k$, therefore we obtain
\[
\|S_k( S_i u  S_j v)\|_{\lX^s} \lesssim  2^{(\frac{d}2-s)j} a_i b_j \|u\|_{\lX^s}
\|v\|_{\lX^s}.
\] 
Upon summation over $i, j$, we get the desired bound for the high-low
interactions.

{ \em High-high interactions:} $|i-j| \leq 4$ and $i,j \geq k-4$.
For $j > k$ we use Bernstein's inequality at frequency $2^k$ to obtain
\[
\begin{split}
\| S_k(S_i u  S_j v)\|_{l^1_k X_k} \lesssim  2^{\frac{kd}2}  
\| S_i u\|_{l^1_k X_k}  \|S_j v \|_{L^\infty L^2}.
\end{split}
\]

Each $\mathcal Q_i$ cube contains about $2^{d(i-k)}$ 
$\mathcal Q_k$ cubes and $X_i \subset X_k$; therefore we obtain
\[
\| S_k (S_i u  S_j v) \|_{l^1_k X_k} \lesssim  2^{d(i-k)} 2^{\frac{kd}2} 
\| S_i u\|_{l^1_i X_i}  \| S_j v \|_{L^\infty L^2} ,
\]
i.e.
\begin{equation}\label{partAref}
\| S_k(S_i u  S_j v) \|_{\lX^s} \lesssim  2^{(\frac{d}2-s)(2i-k)}  a_i b_j
\|u\|_{\lX^s}  \|v \|_{\lX^s} .
\end{equation}
The corresponding part of the bound \eqref{fe:u_squared} follows after summation over $i,j$.

Next we turn our attention to the Moser estimate \eqref{fe:moser}.
Following an idea in \cite{T-WM} we consider a multilinear
paradifferential expansion, which follows from the Fundamental Theorem
of Calculus. For the purpose of this proof we replace the discrete
Littlewood-Paley decomposition by a continuous one
\[
Id = S_0 + \int_{0}^\infty S_k \ dk,
\]
denote $u_k = S_k u$, and, by a slight abuse of notation,  $\uz=S_0 u$.
Then we can write
\begin{equation}
  \label{ftc}
S_k F(u) = S_k F(\uz) + \int_0^\infty S_k(u_{k_1}F'(u_{<k_1}))\,dk_1.
\end{equation}

To estimate the first term, we begin with 
\[\|\partial^\alpha \uz\|_{L^\infty}\lesssim
\|\uz\|_{L^\infty},\quad \|\partial^\alpha
\uz\|_{l^1_0X}\lesssim \|\uz\|_{l^1_0X}.\]
Then, repeated applications of the chain rule lead to 
\begin{align*}
  \|\partial^\alpha F(\uz)\|_{L^\infty} &\lesssim
  \|\uz\|_{L^\infty} c(\|\uz\|_{L^\infty}),\\
\|\partial^\alpha F(\uz)\|_{l^1_0X_0}&\lesssim \|\uz\|_{l^1_0X_0} c(\|\uz\|_{L^\infty}).
\end{align*}
Hence
\[
\|S_k F(\uz)\|_{l^1_k X_k} \lesssim 2^{\frac{k}2}\|S_k F(\uz)\|_{l^1_0
  X_0}\lesssim 2^{-Nk} \|\uz\|_{l^1_0 X_0}c(\|\uz\|_{L^\infty})
\]
for any $N$.  The $\lX^s$ bound for the first term of \eqref{ftc} then
follows trivially.

For the second term in \eqref{ftc}, we consider three cases.

{\bf Case I:}  $k-4\le k_1 \le k+4$.  This is the easiest case as
\[
\|S_k(u_{k_1} F'(u_{<k_1}))\|_{l^1_k X_k}\lesssim
\|u_{k_1}\|_{l^1_{k_1} X_{k_1}} c(\|u_{<k_1}\|_{L^\infty}),
\]
therefore
\[
\|S_k(u_{k_1} F'(u_{<k_1}))\|_{\lX^s} \lesssim a_{k_1} \|u\|_{\lX^s}   c(\|u_{<k_1}\|_{L^\infty}).
\]
For $|k-k_1|\leq 4$ we have $a_{k_1} \sim a_k$, and the $k_1$ integration is trivial.

{\bf Case II:}  $k_1 < k-4$.  In this case,
\[
S_k(u_{k_1} F'(u_{<k_1})) = S_k(u_{k_1} \tilde{S}_{k} F'(u_{<k_1})),
\]
for a multiplier $\tilde{S}_k$ which similarly localizes to frequency $2^k$ and 
\[
S_k\tilde{S}_k = S_k.
\]
Applying the chain rule as above, it follows that
\begin{equation}
\|\tilde{S}_k F'(u_{<k_1})\|_{L^\infty} \lesssim 2^{-N(k-k_1)}
c(\|u_{<k_1}\|_{L^\infty}), \qquad k_1 \leq k
\label{gli}
\end{equation}
and thus,
\begin{align*}
  \|S_k (u_{k_1}F'(u_{<k_1}))\|_{l^1_k X_k} &\lesssim
  2^{\frac{k-k_1}2} \|u_{k_1}\|_{l^1_{k_1} X_{k_1}} \|\tilde{S}_k F'(u_{<k_1})\|_{L^\infty}\\
&\lesssim 2^{-N(k-k_1)} \|u_{k_1}\|_{l^1_{k_1}X_{k_1}} c(\|u_{<k_1}\|_{L^\infty}),
\end{align*}
which leads to
\[
  \|S_k (u_{k_1}F'(u_{<k_1}))\|_{\lX^s} \lesssim   2^{-N(k-k_1)} a_{k_1} \|u\|_{\lX^s}
c(\|u\|_{L^\infty}).
\]
The $k_1$ integration is now straightforward.

 {\bf Case III:} $k_1 > k+4$.  
In this case, we apply the Fundamental Theorem of Calculus again to
see that
\begin{multline}\label{orderTwo}
\int_{k+4}^\infty S_k(u_{k_1}F'(u_{<k_1}))\,dk_1 = \int_{k+4}^\infty
S_k(u_{k_1} F'(\uz))\,dk_1 \\+ \int_{k+4}^\infty \int_0^{k_1}
S_k(u_{k_1}u_{k_2}F''(u_{<k_2}))\ dk_2\ dk_1.
\end{multline}

For the first term in the right of \eqref{orderTwo}, we have that
\[
S_k(u_{k_1}F'(\uz)) = S_k(u_{k_1}\tilde{S}_{k_1}F'(\uz)).
\]
Therefore, as there are $2^{d(k_1-k)}$ cubes of sidelength $2^k$
contained in a cube with sidelength $2^{k_1}$, it follows that
\begin{align*}
  \|S_k(u_{k_1}F'(\uz))\|_{l^1_k X_k} & \lesssim 2^{d(k_1-k)}
  \|u_{k_1}\|_{l^1_{k_1}X_{k_1}} \|\tilde{S}_{k_1}F'(\uz)\|_{L^\infty}\\
&\lesssim 2^{d(k_1-k)-Nk_1} \|u_{k_1}\|_{l^1_{k_1}X_{k_1}}
\|\uz\|_{L^\infty} c(\|\uz\|_{L^\infty}).
\end{align*}
This yields
\[
 \|S_k(u_{k_1}F'(\uz))\|_{\lX^s} \lesssim 2^{(d-s)(k_1-k)} 2^{-Nk_1} a_{k_1} \|u\|_{\lX^s} 
c(\|\uz\|_{L^\infty}).
\]
The desired estimate follows easily after  a $k_1$  integration. 

We now examine the second term in the right of \eqref{orderTwo}.  Here
we have two subcases to examine separately. 

{\bf Case III(a):}  $k_1-4 \leq k_2 \leq  k_1$.
Then we can argue as in \eqref{partAref} to obtain
\[\|S_k(u_{k_1}u_{k_2}F''(u_{<k_2}))\|_{l^1_kX_k}\lesssim 2^{dk_1}
2^{-\frac{dk}{2}} \|u_{k_1}\|_{l^1_{k_1}X_{k_1}} \|u_{k_2}\|_{L^\infty L^2}
c(\|u\|_{L^\infty}).
\]

{\bf Case III(b):} $0 < k_2 \leq k_1-4$. Then
\[
S_k(u_{k_1}u_{k_2}F''(u_{<k_2})) =
S_k(u_{k_1}u_{k_2}\tilde{S}_{k_1}F''(u_{<k_2})).
\]
Therefore using \eqref{gli} for $\tilde{S}_{k_1}F''(u_{<k_2})$ and
Bernstein's inequality at frequency $2^k$ we have
\begin{multline*}
  \|S_k(u_{k_1}u_{k_2}F''(u_{<k_2}))\|_{l^1_kX_k}\\\lesssim \ 2^{d(k_1-k)}2^{\frac{dk}2}
2^{-N(k_1-k_2)}
  \|u_{k_1}\|_{l^1_{k_1}X_{k_1}} \|u_{k_2}\|_{L^\infty L^2} c(\|u\|_{L^\infty}).
\end{multline*}

Combining the two cases and adding in the Sobolev weights this leads to
\[
\begin{split}
\|S_k(u_{k_1}u_{k_2}F''(u_{<k_2}))\|_{\lX^s} \lesssim  2^{(2k_1-k)(\frac{d}2-s)-N(k_1-k_2)}
a_{k_1} a_{k_2} \| u\|_{\lX^s}^2 c(\|u\|_{L^\infty})
\end{split}
\]
which can be integrated with respect to $k_1$, $k_2$.

 b) As a general rule, here we always estimate the bilinear expressions in $Y$,
and never in $L^1 L^2$. By the definition of the $Y$ space, for each $l \le j$ 
we have 
\begin{equation}\label{easy-y}
\| f\|_{l^1_j Y} \lesssim 2^{\frac{l}2} \|f\|_{l^1_l L^2}.
\end{equation}
We use the standard Littlewood-Paley dichotomy, and consider expressions of the 
form $S_k(S_i u S_j v)$. There are two cases to examine.

{\em High-low interactions:} $|i-k| \leq 4$ and $j < i-4$. 
Applying \eqref{easy-y} with $l = j$ we obtain
\[
\| S_{i} u S_j v \|_{l^1_k Y_k } \lesssim
2^{\frac{j-k}2} 
\| S_{i} u S_j v\|_{l^1_j L^2}
\lesssim  2^{\frac{j-k}2}  \|S_i u\|_{l^\infty_j L^2}
\| S_{j} v\|_{l^1_j L^\infty}.
\]
For the first factor we  use the $X$ norm  and for the second
we use Bernstein's inequality. This yields
\begin{equation}\label{bilinNoSum}
\| S_{i} u S_j v \|_{l^1_k Y_k} \lesssim 
 2^{\frac{d+2}{2}j -k }   \|S_i u\|_{X_i}
\| S_{j} v\|_{l^1_j L^\infty L^2}, 
\end{equation}
and further
\begin{equation}
\|S_k(S_{i} u S_j v)\|_{l^1_k Y_k} \lesssim 
 2^{\frac{d+2}{2}j -k }   \|S_i u\|_{l^1_i X_i}
\| S_{j} v\|_{l^1_j X_j}. 
\label{xxy:hl}\end{equation}

The alternative low-high interactions can be handled by similar arguments. 

{\em High-High interactions.}  $|i-j| \leq 4$ and $i,j \geq k-4$.
Applying \eqref{easy-y} with $l=k$,
Cauchy-Schwarz to transition from $2^k$ sized cubes to $2^j$ sized
cubes and then Bernstein's inequality, we have
\begin{align*}
  \| S_k (S_{i} u S_{j} v)\|_{l^1_k Y_k} &\lesssim  
\| S_k(S_{i} u S_j v)\|_{l^1_k L^2}\\
&\lesssim  2^{\frac{d}2(j-k)}
\| S_k(S_{i} u S_j v)\|_{l^1_j L^2}\\
&\lesssim 2^{\frac{jd}{2}} \|S_k(S_iuS_jv)\|_{l^1_j
  L^2_t L^1_x}\\
&\lesssim 2^{\frac{jd}{2}} \|S_i u\|_{l^1_i L^2} \|S_j
v\|_{L^\infty L^2}.
\end{align*}
Thus we obtain
\begin{equation}
\| S_k(S_{i} u S_j v) \|_{l^1_k Y_k} \lesssim 2^{\frac{jd}{2}}
 \|S_i v\|_{l^1_i X_i}
\| S_{j} u\|_{l^1_j X_j}.  
\label{xxy:hh}
\end{equation}

The desired bounds  \eqref{fe:xxy1}, \eqref{fe:xxy2} 
and \eqref{fe:xxy3} follow easily from the dyadic bounds \eqref{xxy:hl}
and \eqref{xxy:hh} after summation.

c)  For the commutator we claim the representation
\begin{equation}
\nabla [S_{< k-4} g,A(D)] \nabla S_k u  = L( \nabla S_{< k-4} g,  \nabla S_k u )
\label{comrep}\end{equation}
where $L$ is a disposable operator, i.e. a translation invariant operator
of the form 
\[
L(f,g)(x) = \int f(x+y) g(x+z) w(y,z) dy dz, \qquad \|w\|_{L^1} \lesssim 1.
\]
Assume this representation holds. Then, since the $\lX^s$ spaces are
translation invariant (i.e. they admit translation invariant
equivalent norms), the commutator bound \eqref{fe:xxycom} becomes a
direct consequence of \eqref{fe:xxy2}.

To prove \eqref{comrep} we first observe that we can harmlessly replace
the multiplier $A(D)$ by $\tilde S_k A(D)$ and $S_{< k-4} g$ by 
$\tilde S_{< k-4} S_{< k-4} g$. Denoting $g_1 = S_{< k-4} g$ and $ u_1 =  \nabla S_k u$,
the above commutator is written in the form
\[
C(g_1,u_1) = \nabla [\tilde S_{< k-4} g_1,\tilde S_k A(D)] u_1.
\]
The operators $\tilde S_k A(D)$ and $\tilde S_{< k-4}$
have  kernels $K(y)$, $H(y)$ which
satisfies bounds of the form
\[
|\partial^\alpha K(y)|, |\partial^\alpha H(y)|\lesssim_\alpha 2^{(d+|\alpha|)k} (1+ 2^k |y|)^{-N}. 
\]
Then we can write
\[
\begin{split}
C(g_1,u_1) (x)= &  \nabla_x \int (g_1(x-z) - g_1(x-y-z)) H(z) K(y) u_1(x-y) dy dz 
\\
= &  \nabla_x \int_0^1 \int  y \nabla g_1(x-z - hy) H(z) K(y) u_1(x-y) dy dz dh 
\\
= &  \nabla_x \int_0^1 \int  y \nabla g_1(x-z) H(z+hy) K(y) u_1(x-y) dy dz dh 
\end{split}
\]
Distributing the $x$ derivative in front and integrating by parts with respect to either
$y$ or $z$ this leads to the representation  \eqref{comrep} where the kernel $w$ of $L$
is given by 
\[
w(y,z) = (\nabla_y + \nabla_z)  \int_0^1 y H(z+hy) K(y) dh.
\]
The $L^1$ bound on $w$ follows from the above bounds on $H$, $K$ and their 
derivatives.
\end{proof}

%%%%%%%%%%%%%%%%%%%%%%%%%%%%%%%%%%%%%%%%%%%%%%%%%%%%%%%%%%%%%%%%%%%%%%%%%%%%%%%%%%%%%%%%%%
%%%%%%%%%%%%%%%%%%%%%%%%%%%%%%%%%%%%%%%%%%%%%%%%%%%%%%%%%%%%%%%%%%%%%%%%%%%%%%%%%%%%%%%%%

\section{Local Smoothing Estimates}
\label{sec:LED}

In this section we consider a frequency localized linear Schr\"odinger 
equation
\begin{equation} \label{freq-loc}
(i \partial_t + \partial_k g^{kl}_{< j-4}  \partial_l ) u_j = f_j, \qquad 
u_j(0)= u_{0j} .
\end{equation}
The main result of this section is as follows:
\begin{prop}
\label{prop:morawetz}
Assume that the coefficients $g^{kl}$ in  \eqref{freq-loc} 
satisfy
\begin{equation}\label{small_hyp}
\| g^{kl}-\delta^{kl}\|_{\lX^s} \ll 1
\end{equation}
for some $s>\frac{d}{2}+2$.  
Let   $u_j$  be a solution to  \eqref{freq-loc}  which
is localized at frequency $2^j$. Then the following estimate 
holds:
\begin{equation} \label{l1le}
 \| u_{j} \|_{l^1_j X_j} \lesssim \| u_{0j} \|_{l^1_j L^2} +  
 \| f_{j} \|_{l^1_j Y_j} .
\end{equation}
\end{prop}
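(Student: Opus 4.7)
The plan is to handle the two components of the $X_j$ norm separately, proving the $L^\infty L^2$ bound by an energy argument and the $2^{-j/2} X$ bound by a Morawetz-type positive commutator, and only at the end to pass from a single-scale estimate to the $l^1_j$-summed version by exploiting linearity together with the atomic structures of both the data space $l^1_j L^2$ and the forcing space $l^1_j Y_j$.

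For the energy component, I would pair the equation with $\bar u_j$ and take imaginary parts: because the principal part is in divergence form with real symmetric $g$, one gets $\frac{d}{dt}\|u_j\|_{L^2}^2 = 2\,\Im(f_j, u_j)$, and the duality $Y^* = X$ proved earlier, together with the definition of $Y_j$, yields $\|u_j\|_{L^\infty L^2} \lesssim \|u_{0j}\|_{L^2} + \|f_j\|_{Y_j}$. Localizing the test function against $\chi_Q \bar u_j$ for $Q \in \mathcal Q_j$ and summing upgrades this to the $l^1_j$ form.

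For the local smoothing component I would run the positive-commutator identity
\begin{equation*}
\frac{d}{dt}\langle A u_j, u_j\rangle = \langle i[P,A] u_j, u_j\rangle + 2\,\Im\,\langle A u_j, f_j\rangle,
\end{equation*}
where $P = \partial_k g^{kl}_{<j-4}\partial_l$ and $A$ is a self-adjoint multiplier chosen adaptively. Following the wedge decomposition pointed to in the introduction, I would split the frequency annulus $|\xi|\sim 2^j$ into finitely many narrow cones $\Gamma$ and, for each, use a multiplier of the form $\chi_\Gamma(D) a(\theta \cdot x) \chi_\Gamma(D)$, where $\theta$ is the axis of $\Gamma$ and $a$ is a bounded monotone function built as a superposition of $\arctan$-type profiles across all dyadic scales $l \leq j$. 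For the flat operator $\Delta$ this multiplier gives $\langle i[\Delta,A]u_j,u_j\rangle$ as a nonnegative quadratic form controlling the full $2^{-j}\|u_j\|_X^2$ (the summation over wedges restoring isotropy). The variable-coefficient correction to this commutator can be rewritten as a commutator $\nabla [S_{<j-4} g, A(D)] \nabla u_j$ plus harmless lower-order pieces; by \eqref{fe:xxycom} and the bilinear estimates of Proposition \ref{fe:nonlinear}(b), this correction carries the small prefactor $\|g - I\|_{\lX^s}$ and is therefore absorbed on the left.

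Finally, to promote the single-scale estimate to the full $l^1_j$ form \eqref{l1le}, I would decompose $u_{0j} = \sum_{Q \in \mathcal Q_j} \chi_Q u_{0j}$, expand $f_j$ using its $l^1_j Y_j$ atomic structure, and invoke linearity. I expect the main obstacle to reside in this last step: since the Schr\"odinger flow has infinite speed of propagation, promoting a single $L^2$-localized atom of data to a bound in $l^1_j X_j$ on the solution requires exploiting the fact that at frequency $2^j$ the group velocity is $\sim 2^j$, so unit-time propagation exactly matches the $l^1_j$ scale. Concretely, I anticipate needing the Morawetz multiplier $A$ itself to be $l^1_j$-adapted, and to verify that every variable-coefficient error term is controlled in $l^1_j Y_j$ (not merely in $Y_j$) via Proposition \ref{fe:nonlinear}(b,c), so that the smallness of $\|g - I\|_{\lX^s}$ can absorb the perturbation at the level of the $l^1_j X_j$ norm and close the estimate.
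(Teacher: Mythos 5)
Your proposal tracks the paper's proof closely for the core single-scale estimate: the energy bound via pairing with $\bar u_j$ and the duality $Y_j^*=X_j$, the positive-commutator identity, the wedge decomposition in frequency (the paper's $\Theta_{j,k}$, with the cross terms $[A,\Theta_{j,k}]u_j$ handled via \eqref{fe:xxycom}), and the use of the bilinear bounds to absorb the $g-I$ correction to the commutator under the smallness hypothesis \eqref{small_hyp}. One cosmetic difference: the paper does not superpose the multiplier over scales $l\le j$; it fixes one cube $Q\in\mathcal Q_l$, uses a single-scale multiplier $i2^j\M=m_l(x_1)\p_1+\p_1 m_l(x_1)$ with $m_l(s)=m(2^{-l}s)$, proves \eqref{le-l} for that $Q$ and $l$, and only takes the supremum over $Q$ and $l$ at the very end. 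Your superposed arctan-type profile is a legitimate alternative but is more work than necessary.

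The genuine gap is in your final step, the promotion of the single-scale bound \eqref{le} to the summed bound \eqref{l1le}. You propose decomposing the \emph{initial data} $u_{0j}=\sum_Q\chi_Q u_{0j}$ and the forcing $f_j$ into atoms, then using linearity plus a group-velocity heuristic. This does not close: because the Schr\"odinger flow has infinite speed of propagation, the evolution of a single $L^2$ atom supported in one cube has no reason to stay summable in $l^1_j X_j$, and linearity alone transfers no localization to the solution. The paper instead decomposes the \emph{solution}: pick a partition of unity $\chi_Q$ on cubes of the enlarged scale $M2^j$ (with the $\chi_Q$ frequency-localized at $\lesssim 1$ and smooth on the $M2^j$ scale), note that $\chi_Q u_j$ solves $(i\p_t-A)(\chi_Q u_j)=\chi_Q f_j-[A,\chi_Q]u_j$, apply \eqref{le} to each piece, and observe that the commutator satisfies $\sum_Q\|[A,\chi_Q]u_j\|_{L^1L^2}\lesssim M^{-1}\sum_Q\|\chi_Q u_j\|_{L^\infty L^2}$ thanks to $|\nabla\chi_Q|\lesssim(M2^j)^{-1}$ absorbing the $2^j$ from a derivative falling on $u_j$. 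Taking $M$ large (independent of $j$) makes this absorbable, and the passage from $M2^j$-cubes to $2^j$-cubes is routine. This is simpler and sidesteps the need for an $l^1_j$-adapted multiplier and for $l^1_j Y_j$ control of every error term that your plan anticipates.
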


\begin{proof}
Dropping the $l^1_j$ summation, our main task will be to prove the 
simpler bound 
\begin{equation} \label{le}
 \| u_{j} \|_{X_j } \lesssim \| u_{0j} \|_{L^2} +  
 \| f_{j} \|_{Y_j}.
\end{equation}
Then \eqref{l1le} will follow easily via $\mathcal Q_j$ localizations.
We rewrite the equation \eqref{freq-loc} in the form
\[
(i \partial_t - A ) u_j = f_{j1} + f_{j2}, \qquad 
u_j(0)= u_{0j},
\]
where $A = -\partial_k g^{kl}_{< j-4}  \partial_l $ is self-adjoint and
$f_{j1} \in L^1 L^2$,  $f_{j2} \in Y$.

The estimate \eqref{le} has two components, an energy bound and local
energy decay.  We have the trivial inequality $\|u\|_{X} \lesssim \|u\|_{L^\infty L^2}$;
therefore the energy estimate suffices for small $j$.

The energy-type estimate is standard if the right hand
side is in $L^1_tL^2_x$, but we would like to allow the right hand
side to be in the dual smoothing space as well.  Using the common
notation $D_t = \frac{1}{i}\partial_t$, we frame it in an
abstract framework as follows:

\begin{lemma}
Let $A$ be a self-adjoint operator. Let $u$ solve the equation 
\begin{equation}
(D_t +A) u = f \qquad u(0) = u_0
\end{equation}
in the time interval $[0,T]$. Then we have 
\begin{equation}
\| u\|_{L^\infty_t L^2_x}^2 \lesssim \| u_0\|_{L^2}^2
+ \| u\|_{X_j} \|f\|_{Y_j}. 
\label{eest}\end{equation}
\end{lemma}

\begin{proof}
We need only compute
\begin{equation}\label{eest2}
\frac{d}{dt} \frac12 \|u(t)\|_{L^2}^2 = \Im \langle u, f \rangle,
\end{equation}
and notice that for each $t \in [0,T]$ we have by duality
\[
\| u(t)\|_{L^2}^2 \lesssim \|u(0)\|_{L^2}^2 + \| u\|_{X_j} \|f\|_{Y_j}.
\]
We take the supremum over $t$ on the left and  the conclusion follows.
\end{proof}

Next we consider the local energy decay estimate.
We will prove that the following holds for $Q \in \mathcal {Q}_{l}$ and $ 0 \leq l \leq j$:
\begin{equation}\label{le-l}
\begin{split}
  2^{j-l}\| u_j\|_{L^2(Q)}^2 \lesssim & \ 
\|u_j\|_{L^\infty L^2}^2  +
  \|u_j\|_{X_j} \|f_j\|_{Y_j}  
  + (2^{-j} +  \| g - I\|_{\lX^s}) \|u_j\|_{X_j}^2.
\end{split}
\end{equation}
Suppose this is true. Taking the supremum over $Q \in \mathcal {Q}_{l}$
and over $l$,
%$ 0 \leq l \leq j$
we obtain 
\[
\begin{split}
  2^{j}\| u_j\|_{X}^2 \lesssim & \
\|u_j\|_{L^\infty L^2}^2+
  \|u_j\|_{X_j} \|f_{j}\|_{Y_j}    + (2^{-j}+ \| g - I\|_{\lX^s}) \|u_j\|_{X_j}^2.
\end{split}
\]
The last term on the right can be discarded 
for large enough $j$ since $\| g - I\|_{\lX^s} \ll 1$. Then we obtain
\[
\begin{split}
  2^{j}\| u_j\|_{X}^2 \lesssim 
\|u_j\|_{L^\infty L^2}^2 +
  \|u_j\|_{X_j} \|f_{j}\|_{Y_j}.
\end{split}
\]
Combined with \eqref{eest} this gives \eqref{le} by the Cauchy-Schwarz inequality.

We now turn our attention to the proof of \eqref{le-l}. 
For a self-adjoint multiplier $\M$, we have
\begin{equation}\label{abstractcomm}
\frac{d}{dt}\la u,\M u\ra = -2\Im\la(D_t+A)u,\M u\ra + \la
i[A,\M]u,u\ra.
\end{equation}
We then wish to construct $\M$ so that
\begin{enumerate}
\item $\|\M u\|_{L^2_x}\lesssim \|u\|_{L^2_x}$,
\item $\|\M u\|_X \lesssim \|u\|_X$,
\item $i\la [A,\M]u,u\ra \gtrsim 2^{j-\ell} \|u\|_{L^2_{t,x}([0,1]\times Q)}^2
- O(2^{-j} +\| g-I\|_{\lX^s}) \| u\|_{X_j}^2$.
\end{enumerate}
If these three properties hold for $u = u_j$ and $ (D_t+A)u_j = f_{j}$, 
then the bound \eqref{le-l} follows. 

As a general rule, we will choose $\M$ to be a first order differential operator 
with smooth coefficients localized at frequency $\lesssim 1$,
\begin{equation}
i 2^j \M = a^k(x) \partial_k + \partial_k a^k(x)
\end{equation}

A key step in our analysis is to dispense with the contribution 
of the difference $g-I$ in the commutator $[A,\M]$. Precisely,
we have  

\begin{lemma}
Let $A = \partial_k g^{kl}  \partial_l $ with $g = g_{< j-4}$
and $\M$ be as above.  Suppose that $s>\frac{d}{2}+2$. 
Then we have 
\begin{equation}
| \la [A,\M]u_j,u_j\ra| \lesssim  \| g - I \|_{\lX^s} \|u_j\|_{X_j}^2.
\end{equation}
\end{lemma}
\begin{proof}
The commutator  $[A,\M]$ can be written in the form
\[
i[A,\M] = 2^{-j} ( \nabla ( g \nabla a + a \nabla g) \nabla + \nabla g \nabla^2 a +
g \nabla^3 a ).
\]
All the $a$ factors are bounded and low frequency, and can therefore
trivially be discarded. Hence the worst term we need to estimate is 
\[
2^{-j} \la a \nabla g \nabla u_j, \nabla u_j \ra.
\]
Due to the frequency localization of $u_j$ we have 
\[
\| \nabla u_j \|_{X_j} \lesssim  2^j \| u_j\|_{X_j}.
\]
Hence by the $Y_j^*=X_j$ duality it remains to show that 
\[
\|  (\nabla g_{<j-4}) v_j\|_{Y_j} \lesssim 2^{-j} \|g - I \|_{\lX^s}\|v_j\|_{X_j}
\]
for $v_j = \nabla u_j$. But this is a consequence of the bilinear bound
\eqref{bilinNoSum}.
\end{proof}

%{\color{red} I changed the reference at the very end of the proof.
%  Citing what we had, we would get $l^1X$ spaces instead of $X_j$ in
%  the rhs.  This would complicate bootstrapping later.  }

The next step is to prove \eqref{le-l} under the additional assumption that
$u_j$ is frequency localized in an angle
\begin{equation}
\supp \hat u_j \subset \{ |\xi| \lesssim \xi_1 \}.
\label{uj-ang}\end{equation}
% in an angle $\{ |\xi| \lesssim \xi_1 \}$.
Here, we take a small angle about the first coordinate axis, and the
argument can be repeated similarly near the other axes.
By translation invariance we can assume that $Q = \{|x_j| \leq
2^l\,:\, j=1,\dots, d \}$.
Then we consider a multiplier $\M$ of the form 
\[
i 2^j \M = m_l(x_1) \partial_1 + \partial_1 m_l(x_1) 
 \]
where $m_l(s) = m(2^{-l} s)$ with $m$ a smooth bounded increasing function 
with $m'(s) = \psi^2(s)$ for some Schwartz function $\psi$ 
localized at frequency $\lesssim 1$ with 
$\psi \sim 1$ for $|s| \leq 1$.

The properties (1) and (2) clearly hold for $\M$ and $u = u_j$ due to
the frequency localizations of $u_j$ and $m_l$.  It remains to
verify (3).  By the previous lemma applied for $g-I$, we can set
$A=-\Delta$. Then
\[
-i 2^j  [A,\M] = 2^{-l+2} \partial_1  \psi^2(2^{-l} x_1) \partial_1 + O(1).
\]
The last term is bounded, therefore 
\[
i 2^j \langle [A,\M] u_j, u_j \rangle =  2^{-l+2}  \|\psi(2^{-l} x_1) \partial_1 u_j\|_{L^2}^2 
+ O(\|u_j\|_{L^2}^2).
\]
Given the frequency and angular localization of $u_j$,  we obtain
\[
2^{-l} 2^{2j} \| \psi(2^{-l} x_1)  u_j\|^2_{L^2} \lesssim i 2^j \langle [A,\M] u_j, u_j \rangle
+ O(\|u_j\|_{L^2}^2).
\]
Hence (3) follows. Thus we have proved \eqref{le-l}
under the additional frequency localization condition \eqref{uj-ang}.

To prove \eqref{le-l} in general we use a wedge decomposition in the
frequency variables.  To this end, we consider a partition of unity $\{\theta_k(\omega)\}_{k=1}^{d}$,
\[
1 = \sum_k  \theta_k(\omega) \qquad \text{in } \ \mathbb{S}^{d-1},
\]
where, for each $k$,  $\theta_k(\omega)$ 
is supported in a small angle.   We then define the localized functions
$u_{j,k} = \Theta_{j,k}u_j$ via
\[
\mathcal{F}\Theta_{j,k}u = \theta_k\Bigl(\frac{\xi}{|\xi|}\Bigr)
\sum_{j-1\le l \le j+1} \phi_{l}(\xi)\hat{u}(t,\xi).
\]
These solve the equations
\[
(i\partial_t -A) u_{j,k} = \Theta_{j,k} f_{j}  - [A,\Theta_{j,k}] u_j.
\]
By Plancherel's theorem, it is trivial to see that $\Theta_{j,k}$ is
$L^2$ bounded.  We note further that the kernel of the operator
$\Theta_{j,k}$ has Schwartz class decay outside a ball of radius
$2^{-j}$.  Thus, it is easy to show that $\Theta_{j,k}$ is bounded on
$X$, and by duality on $Y$.

To prove \eqref{le-l} for $u_j$ we apply the appropriate multipliers to each of the $u_{j,k}$ and 
sum up. We obtain
\begin{equation}\label{le-l1}
\begin{split}
\!\!\!  2^{j-l}\| u_j\|_{L^2(Q)}^2 \lesssim & \ 
\|u_j\|_{L^\infty L^2}^2 +
  \|u_j\|_{X_j} (  \|f_{j}\|_{Y_j}   + \!\! \sum_k \| [A,\Theta_{j,k}] u_j \|_{Y_j})
\\   + & (2^{-j} + \| g - I\|_{\lX^s}) \|u_j\|_{X_j}^2 
\end{split}
\end{equation}
It remains to estimate the commutator, which is done via \eqref{fe:xxycom}.
Then  \eqref{le-l} follows.

%{\color{red} I'm not sure that the commutator falls into
%  \eqref{fe:xxycom}.  Does it?  For example, the $\Theta_{j,k}$
%  doesn't seem to fall into the same portion of the trichotomy as in
%  \eqref{fe:xxycom}.  Moreover, getting the $l^1$ summing makes
%  bootstrapping a bit harder later.  How best fix this?  The proof of
%  \eqref{fe:xxycom} can probably be modified to give us what we want.
%Other suggestions?}

%{\color{red} In the next display, I changed $\nabla$ to
%  $\nabla^2$ in the second piece}

 We now show how \eqref{l1le} follows from \eqref{le}.  We consider a
 partition of unity $\chi_Q$ corresponding to cubes $Q$ of scale $M
 2^j$.
 Allowing rapidly decreasing tails, we can assume that the functions
 $\chi_Q$ are localized at frequencies $\lesssim 1$.  We can also assume 
that $\chi_Q$ are smooth on the $M 2^j$ scale, in particular
 \[
|\nabla \chi_Q|\lesssim (2^{j} M)^{-1}, \qquad |\nabla^2 \chi_Q|\lesssim (2^{j} M)^{-2}.
\]  
The functions $\chi_Q u_j$ solve
\[
(i\partial_t - A)(\chi_Q u_j) = \chi_Q
f_j -[A,\chi_Q] u_j.
\]
We apply \eqref{le} to each of the functions $\chi_Q u_j$ and add them up.
This gives 
\begin{equation} \label{le-Q}
\begin{split}
\sum_{Q}  \| \chi_Q u_{j} \|_{X_j } \lesssim & \ 
\sum_Q \| \chi_Q u_{0j} \|_{L^2} +   \| \chi_Q f_{j} \|_{Y_j} + \| [A,\chi_Q] u_j\|_{L^1 L^2}.
\end{split}
\end{equation}
It remains to estimate the commutators. Using the bounds on the derivatives of $\chi_Q$
we obtain
\[
\sum_Q \| [A,\chi_Q] u_j\|_{L^1 L^2} \lesssim M^{-1}  \sum_Q \| \chi_Q u_j\|_{L^\infty L^2}.
\]
Hence if $M$ is large enough (independently of $j$) then the last term on the right 
in \eqref{le-Q} can be discarded, and we are left with 
\begin{equation} \label{le-Q1}
\begin{split}
\sum_{Q}  \| \chi_Q u_{j} \|_{X_j } \lesssim 
\sum_Q \| \chi_Q u_{0j} \|_{L^2} +   \| \chi_Q f_{j} \|_{Y_j}.
\end{split}
\end{equation}
The transition from cubes of size $M2^j$ to cubes of size $2^j$ is straightforward, 
and \eqref{l1le} follows.

\end{proof}

%%%%%%%%%%%%%%%%%%%%%%%%%%%%%%%%%%%%%%%%%%%%%%%%%%%%%%%%%%%%%%%%%%%%%%%%%%%%%%%%%%%%%%%%%%%%%%%%%%%%
%%%%%%%%%%%%%%%%%%%%%%%%%%%%%%%%%%%%%%%%%%%%%%%%%%%%%%%%%%%%%%%%%%%%%%%%%%%%%%%%%%%%%%%%%%%%%%%%%%%%

\section{Proof of 
Theorem \ref{thm:main1}}
\label{sec:proof}

We recall that the equation \eqref{eqn:quasiquad} turns into an
equation of the form \eqref{eqn:quasiquad1} by differentiation. Hence
it suffices to prove part (b) of the theorem. We recast the equation
\eqref{eqn:quasiquad1} in a paradifferential form, given by
\begin{eqnarray}
\label{eqn:param1}
\left\{ \begin{array}{l}
L_{j} u_j  = f_j, \\ \\
u_j (0) = (u_{0})_{j},
\end{array} \right.
\end{eqnarray}
where
\begin{eqnarray*}
L_{j} = (i \p_t + \p_k g^{kl}_{<j-4} \p_l)
\end{eqnarray*}
and 
\begin{equation}\label{fjdef}
f_j = S_j F(u,\nabla u) - S_j  \partial_{k} g^{kl}_{>j-4}  \partial_l  u
- [S_j, \partial_{k} g^{kl}_{<j-4}\partial_l] u.
\end{equation}

\subsection{ A formal bootstrap}
Using the bounds in Proposition~\ref{prop:nonlinear} one can estimate 
the $f_j$'s by the following

\begin{lemma} \label{fjest}
Let $s> \frac{d}{2}+2$,   and  $u \in \lX^s$ with frequency envelope $\{ a_j \}$. Then  
the functions $f_j$ in \eqref{fjdef} satisfy
\begin{equation}\label{fjbd}
\|f_j\|_{\lY^s}  \lesssim a_j \|u\|_{\lX^s}^2 c(\|u\|_{\lX^s}  ).
\end{equation}
\end{lemma}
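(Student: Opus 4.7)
The plan is to decompose $f_j$ into the three contributions appearing in \eqref{fjdef}, namely the nonlinearity $S_j F(u,\nabla u)$, the high-frequency metric piece $S_j \partial_k g^{kl}_{>j-4}\partial_l u$, and the paradifferential commutator $[S_j,\partial_k g^{kl}_{<j-4}\partial_l]u$, and then to estimate each separately using Proposition~\ref{fe:nonlinear}. As a preliminary reduction, since $g^{kl}(u) - \delta^{kl}$ is a smooth function of $u$ which vanishes at the origin, the Moser estimate \eqref{fe:moser} yields
\[
\|g-I\|_{\lX^s} \lesssim \|u\|_{\lX^s}\, c(\|u\|_{\lX^s}),
\]
with the frequency envelope $\{a_k\}$ of $u$ transferring to an admissible envelope for $g-I$ in $\lX^s$.

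For the nonlinearity, since $F(y,z) = O(|y|^2+|z|^2)$, Taylor's theorem lets us write $F(u,\nabla u)$ as a finite sum of terms of the form $w\cdot w'\cdot H(u,\nabla u)$ with $w, w' \in \{u, \partial_l u\}$ and $H$ smooth. The smooth factor $H(u,\nabla u)$ is controlled in the appropriate $\lX^\tau$ space by combining the Moser estimate \eqref{fe:moser} with the algebra bound \eqref{fe:u_squared}, keeping in mind that $\nabla u \in \lX^{s-1}$. Each bilinear product is then bounded via \eqref{fe:xxy1} or \eqref{fe:xxy2}, pairing one of $w, w'$ with $H$ times the remaining factor. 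The most delicate case is $\partial_l u \cdot \partial_m u \cdot H$, for which the threshold $s > \tfrac{d}{2}+2$ provides exactly the slack needed so that the bilinear bounds remain applicable.

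For the high-frequency metric contribution, we pull the outer derivative past $S_j$, writing the term as $\partial_k S_j[g^{kl}_{>j-4}\partial_l u]$. The inner product fits precisely the template $S_k(u\cdot S_{\geq k-4}v)$ of \eqref{fe:xxy3} with $v = g^{kl}$ supplying the high-frequency factor. Since \eqref{fe:xxy3} permits $\sigma = s$, the factor of $2^j$ arising from the outer derivative is absorbed by working one Sobolev index lower inside the $S_j$. Combined with the Moser bound on $\|g-I\|_{\lX^s}$, this produces the desired envelope estimate.

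Finally, the commutator is expanded directly as $\partial_k[S_j, g^{kl}_{<j-4}]\partial_l u$, which matches the form treated by \eqref{fe:xxycom} with multiplier $A(D) = S_j$. That bound supplies an $\lY^0$ estimate of the form $\|g-I\|_{\lX^s}\|S_j u\|_{\lX^0}$, which is promoted to $\lY^s$ by using that the commutator output is frequency-localized near $2^j$, together with the envelope identity $2^{js}\|S_j u\|_{\lX^0} \lesssim a_j \|u\|_{\lX^s}$. The main obstacle throughout is careful frequency-envelope bookkeeping in the Taylor decomposition of $F$ and in pairing the Moser factors with the bilinear bounds at the borderline regularity; this is precisely what motivates the envelope formulation of Proposition~\ref{fe:nonlinear}.
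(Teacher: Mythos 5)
Your proposal decomposes $f_j$ into the same three pieces as the paper and estimates each with the same tools from Proposition~\ref{fe:nonlinear}: the Moser estimate \eqref{fe:moser} together with a bilinear $X\cdot X\to Y$ bound for $S_j F(u,\nabla u)$, \eqref{fe:xxy3} with $\sigma = s$ for the high-frequency metric term, and \eqref{fe:xxycom} for the commutator (with the $\lY^0\to\lY^s$ promotion via frequency localization and the envelope identity, which is exactly the paper's intent). This is essentially the paper's proof, at a comparable level of detail. Two small remarks: for the first term the paper cites \eqref{fe:xxy1} with $\sigma=s-1$, which only yields a $\lY^{s-1}$ bound; the bound that actually lands in $\lY^s$ from two $\lX^{s-1}$ factors is \eqref{fe:xxy2} with $\sigma=s$, so your mention of ``\eqref{fe:xxy1} or \eqref{fe:xxy2}'' is, if anything, closer to what is needed. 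For the second term, the phrase ``the factor of $2^j$ ... is absorbed by working one Sobolev index lower'' glosses over the point that you should Leibniz-distribute $\partial_k$ onto the two factors; the piece $g^{kl}_{>j-4}\,\partial_k\partial_l u$ is then precisely \eqref{fe:xxy3} with $u'=\nabla^2 u\in \lX^{s-2}$, $v'=g-I\in\lX^s$, $\sigma=s$, while the lower-order piece $(\partial_k g)^{kl}_{>j-4}\partial_l u$ is handled by \eqref{fe:xxy2} with $\sigma=s$. The paper's one-line sketch is equally terse on this point, so this is not a substantive deviation.
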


%{\color{red}  For the first $\sigma$ below, we wanted $s-1$?  It was
%  $s$, but \eqref{fe:xxy1} doesn't even apply then.  In the statement
%  above, we wanted $s>\frac{d}{2}+2$ to match the lemma.  I changed
%  from $+1$.  Finally, do we want to modify \eqref{fe:xxycom} to
%  include $s$ to match better the application below? }

\begin{proof}
The first term is estimated using \eqref{fe:moser} followed by \eqref{fe:xxy1}
with $\sigma = s-1$, taking advantage of the fact that $F$ is at least quadratic 
at zero.  The second term is estimated using \eqref{fe:moser} and \eqref{fe:xxy3} with $\sigma = s$.
For the third term we use \eqref{fe:xxycom}.
\end{proof}

As a corollary of the above lemma it follows that
\[
\sum_j \|f_j\|_{\lY^s}^2 \lesssim  \|u\|_{\lX^s}^4 c(\|u\|_{\lX^s}  ).
\]
For each of the equations in \eqref{eqn:param1} we can apply 
Proposition~\ref{prop:morawetz}.  Square summing we obtain
\[
\| u\|_{\lX^s}^2 \lesssim \|u_0\|_{l^1 H^s}^2 + \|u\|_{\lX^s}^4 c(\|u\|_{\lX^s}).
\]
From here a continuity argument {\em formally} leads to 
\[
\| u\|_{\lX^s} \lesssim \|u_0\|_{l^1 H^s}
\]
assuming that the initial data $u_0$ is small enough.

\subsection{ The linear problem}

Here we consider the linear equation
\begin{eqnarray}
\label{lin1}
\left\{ \begin{array}{l}
(i \partial_t + \partial_{k} g^{kl}  \partial_l) u + V \nabla u + W u = h, \\ \\
u (0) = u_{0},
\end{array} \right.
\end{eqnarray}
and we prove the following:

\begin{prop}\label{p:lin1}
a) Assume that the metric $g$ and the potentials $V$ and $W$ satisfy 
\[
\| g - I \|_{\lX^s} \ll 1, \quad \|V\|_{\lX^{s-1}} \ll 1 , \quad \|W\|_{\lX^{s-2}} \ll 1  \qquad  s > \frac{d}2+2.
\]
Then the equation \eqref{lin1} is well-posed for initial data
$u_0 \in l^1 H^\sigma$ with $0 \leq \sigma \leq s-1$. 
and we have the estimate
\begin{equation}\label{bd:lin1}
\| u\|_{\lX^\sigma} \lesssim \|u_0\|_{l^1 H^\sigma} + \|h\|_{\lY^\sigma}.
\end{equation}

b) Assume in addition that $W=0$. Then the equation \eqref{lin1} is well-posed
 for initial data $u_0 \in l^1 H^\sigma$ with $0 \leq \sigma \leq s$, 
and the estimate \eqref{bd:lin1} holds.

\end{prop}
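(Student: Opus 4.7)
My plan is to obtain the a priori estimate \eqref{bd:lin1} via a dyadic frequency decomposition, invoking Proposition~\ref{prop:morawetz} at each frequency and controlling the resulting error terms with the multilinear estimates of Proposition~\ref{fe:nonlinear}; existence and uniqueness will then follow by standard arguments. I would apply $S_j$ to \eqref{lin1} to obtain
\begin{equation*}
L_j u_j = S_j h - S_j(V\nabla u) - S_j(Wu) - \partial_k S_j(g^{kl}_{\geq j-4}\partial_l u) - [S_j, \partial_k g^{kl}_{< j-4}\partial_l]u,
\end{equation*}
with $L_j = i\partial_t + \partial_k g^{kl}_{< j-4}\partial_l$. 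Since $\|g-I\|_{\lX^s}\ll 1$, I can apply Proposition~\ref{prop:morawetz} to bound $\|u_j\|_{l^1_j X_j}$ in terms of the initial data and the $l^1_j Y_j$ norm of the right-hand side.

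After multiplying by $2^{j\sigma}$ and square-summing over $j$, I would control $\|u\|_{\lX^\sigma}^2$ by $\|u_0\|_{l^1 H^\sigma}^2 + \|h\|_{\lY^\sigma}^2$ plus four error contributions from the expansion above. The paradifferential truncation $\partial_k S_j(g^{kl}_{\geq j-4}\partial_l u)$ would be handled by \eqref{fe:xxy3} (applied to $g - I$ and $\nabla u$); the commutator $[S_j,\partial_k g^{kl}_{< j-4}\partial_l]u$ by \eqref{fe:xxycom}; the magnetic term $V\nabla u$ by \eqref{fe:xxy2} with $V\in\lX^{s-1}$ and $\nabla u\in\lX^{\sigma-1}$, valid for $0\leq\sigma\leq s$; and the electric term $Wu$ by \eqref{fe:xxy1} with $W\in\lX^{s-2}$ and $u\in\lX^\sigma$, which is valid only for $0\leq\sigma\leq s-1$. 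Under the smallness hypotheses on $g-I$, $V$, and $W$, each error contribution would carry a small prefactor multiplying $\|u\|_{\lX^\sigma}$, so all four can be absorbed on the left, yielding \eqref{bd:lin1}. In part (b) the $Wu$ term is absent, removing the only obstruction to $\sigma = s$.

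For existence, I would regularize the coefficients $g$, $V$, $W$ and the data (e.g.\ by frequency truncation) to reduce to the classical smooth case, solve there by standard Hilbert-space methods, and then pass to the limit using the uniform a priori bound above. Uniqueness is immediate from applying \eqref{bd:lin1} to the difference of two solutions with vanishing data. The main technical obstacle will be the careful square-summation over $j$: each bilinear estimate in Proposition~\ref{fe:nonlinear} comes with a frequency envelope factor $a_k + b_k$, and I need to verify that combining these envelope bounds with the dyadic local smoothing estimate produces contributions that are genuinely $\ell^2_j$-summable against the Sobolev weights $2^{j\sigma}$, so that the smallness of the coefficients truly multiplies $\|u\|_{\lX^\sigma}$ on the right-hand side rather than a stronger norm that would prevent the absorption step.
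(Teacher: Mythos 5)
Your proposal is correct and follows essentially the same path as the paper: apply $S_j$ to recast \eqref{lin1} as a dyadic paradifferential family with $L_j = i\partial_t + \partial_k g^{kl}_{<j-4}\partial_l$, invoke Proposition~\ref{prop:morawetz} for each dyadic block, estimate the four error terms (paradifferential tail, commutator, $V\nabla u$, $Wu$) via \eqref{fe:xxy3}, \eqref{fe:xxycom}, \eqref{fe:xxy2}, \eqref{fe:xxy1} respectively, square-sum with the $2^{2j\sigma}$ weights, and absorb the small prefactors; your identification of the $Wu$ term, via the $\sigma\le s-1$ restriction in \eqref{fe:xxy1}, as the sole obstruction to $\sigma=s$ in part (a) matches the paper's remark exactly. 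The only (minor) difference is that you spell out the existence argument by regularization and limiting, which the paper leaves implicit.
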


\begin{proof}
We rewrite the equation as a family of equations for the dyadic 
parts of $u$,
\[
\left\{ \begin{array}{l}
(i \partial_t + \partial_{k} g^{kl}_{<j-4}  \partial_l) u_j  = g_j+h_j, \\ \\
u_j (0) = u_{0j},
\end{array} \right.
\]
where
\[
g_j =  -S_j  \partial_{k} g^{kl}_{>j-4}  \partial_l  u_j
- [S_j, \partial_{k} g^{kl}_{<j-4}\partial_l] u_j - S_j V\nabla u -
S_j W u.
\]
As in Lemma~\ref{fjest}, we apply Proposition~\ref{fe:nonlinear} for each of the terms 
in $g_j$ to obtain
\[
\sum_j \|g_j\|_{\lY^\sigma}^2  \lesssim \|u\|_{\lX^\sigma}^2 (\|
g-I\|_{\lX^s}^2+ \|V\|_{\lX^{s-1}}^2 + \|W\|_{\lX^{s-2}}^2).
\]
The estimate \eqref{bd:lin1} follows by applying
Proposition~\ref{prop:morawetz} to each of these equations and summing
in $j$. The more restrictive range of $\sigma$ in part (a) arises due to the 
similar range in \eqref{fe:xxy1}.
\end{proof}

%{\color{red}  \eqref{fe:xxy1} changed from \eqref{fe:xxy2}}

\subsection{The iteration scheme: uniform bounds}

Here we seek to construct solutions to \eqref{eqn:quasiquad1}
iteratively, based on the scheme
\begin{equation}
\label{iterate}
\left\{ \begin{array}{l}
(i \partial_t + \p_j g^{jk} (u^{(n)})  \p_k) u^{(n+1)} = 
F(u^{(n)},\nabla u^{(n)}) , \\ \\
u^{(n+1)} (0,x) = u_0 (x)
\end{array} \right. 
\end{equation}
with the trivial initialization 
\[
u^{(0)} = 0.
\]

Applying at each step Proposition~\ref{p:lin1} 
and assuming that $u_0$ is small in $l^1H^s$ we inductively 
obtain the uniform bound 
\begin{equation}\label{iterate-uniform}
\|u^{(n)}\|_{\lX^s} \lesssim \|u_0\|_{l^1 H^s}.
\end{equation}
Our next goal is to consider the convergence of this scheme.

\subsection{The iteration scheme: weak convergence}

Here we prove that our iteration scheme converges in the weaker $l^1 H^{s-1}$
topology. For this we write an equation for the difference
 $v^{(n+1)} = u^{(n+1)} - u^{(n)}$:
\begin{equation}
\label{iterate-diff}
\left\{ \begin{array}{l}
(i \partial_t + \p_j g^{jk} (u^{(n)})  \p_k) v^{(n+1)} =  V_n \nabla  v^{(n)}+ W_n v^{(n)},
 \\ 
v^{(n+1)}(0,x) = 0,
\end{array} \right. 
\end{equation}
where 
\begin{eqnarray*}
V_n & = & V_n(u^{(n)},\nabla u^{(n)},u^{(n-1)},\nabla u^{(n-1)}), \\
W_n & = & h_1( u^{(n)},u^{(n-1)})  + h_2 ( u^{(n)},u^{(n-1)})  \nabla^2 u^{(n)}.
\end{eqnarray*}
 For $V_n$ and $W_n$ by the Moser estimate \eqref{moser} we have
\[
\| V_n \|_{\lX^{s-1}} \ll 1, \qquad \| W_n \|_{\lX^{s-2}} \ll 1.
\]
This allows us to estimate the right hand side of \eqref{iterate-diff} 
in $\lY^{s-1}$ via \eqref{xxy1} and \eqref{xxy2}.
To estimate $v^{(n+1)}$ we use Proposition~\ref{p:lin1}.
We obtain
\begin{equation}
\| v^{(n+1)}\|_{\lX^{s-1}} \ll \|v^{(n)}\|_{\lX^{s-1}} .
\end{equation}
This implies that our iteration scheme converges in $\lX^{s-1}$ to some function $u$.
Furthermore, by the uniform bound \eqref{iterate-uniform} it follows that
\begin{equation}\label{mainb}
\| u\|_{\lX^s} \lesssim \|u_0\|_{l^1 H^s}.
\end{equation}
Thus we have established the existence part of our main theorem.

\subsection{ Uniqueness via weak Lipschitz dependence}

Consider the difference $v = u^{(1)} - u^{(2)}$ of two solutions. This 
solves an equation of the form \eqref{lin1} where
\begin{eqnarray*}
V & = & V(u^{(1)},\nabla u^{(1)},u^{(2)},\nabla u^{(2)}), \\
W & = & h_1( u^{(1)},u^{(2)})  + h_2 ( u^{(1)},u^{(2)})  \nabla^2 u^{(1)}.
\end{eqnarray*}
Applying Proposition~\ref{p:lin1}(a) we see that this equation is well-posed 
in $l^1 H^{s-1}$, and obtain the estimate
\begin{equation}\label{weak-lip}
\|  u^{(1)} - u^{(2)}\|_{\lX^{s-1}} \lesssim \|  u^{(1)}(0) - u^{(2)}(0)\|_{l^1 H^{s-1}}.
\end{equation}

\subsection{Frequency envelope bounds}

Here we prove a stronger frequency envelope version of the estimate \eqref{mainb}.
 
\begin{prop}\label{envelopes}
Let $u \in \lX^s$ be a small data solution to \eqref{eqn:quasiquad1}, which satisfies
\eqref{mainb}. Let $\{a_j\}$ be an admissible frequency envelope for the initial
data $u_0$ in $l^1 H^s$. Then $\{a_j\}$ is also a frequency envelope for $u$ 
in $\lX^s$.
\end{prop}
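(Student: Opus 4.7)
The claim upgrades the uniform bound \eqref{mainb} to a dyadic estimate that tracks the frequency envelope of the data, and the strategy is to rerun the bootstrap from the formal bootstrap subsection but at each dyadic frequency separately, using the frequency envelope formulations of the multilinear estimates in Proposition \ref{fe:nonlinear}.

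First I would select an admissible frequency envelope $\{b_j\}$ for $u$ in $\lX^s$ via \eqref{freqEnv}; the goal then reduces to showing $b_j \lesssim a_j$. Applying Proposition \ref{prop:morawetz} to the paradifferential equation \eqref{eqn:param1} for each $j$ and multiplying by $2^{js}$ gives
\[
2^{js}\|S_j u\|_{l^1_j X_j} \lesssim 2^{js}\|S_j u_0\|_{l^1_j L^2} + 2^{js}\|f_j\|_{l^1_j Y_j}.
\]
The contribution of the data is controlled by $a_j \|u_0\|_{l^1 H^s}$ directly from the hypothesis on $\{a_j\}$. For $f_j$ as in \eqref{fjdef}, I would revisit the proof of Lemma \ref{fjest} dyadically: each of the three terms is handled by the corresponding frequency envelope bound in Proposition \ref{fe:nonlinear} applied with envelope $\{b_k\}$ for $u$, producing
\[
2^{js}\|f_j\|_{l^1_j Y_j} \lesssim b_j \|u\|_{\lX^s}^2\, c(\|u\|_{\lX^s}).
\]

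To turn the resulting pointwise inequality into the desired envelope comparison, I would weight the $k$-th inequality by $2^{-\delta|j-k|}$, take the supremum over $k$, and exploit the slow variation of $\{a_k\}$ and $\{b_k\}$. This yields
\[
b_j \|u\|_{\lX^s} \lesssim a_j \|u_0\|_{l^1 H^s} + 2^{-\delta j}\|u\|_{\lX^s} + b_j \|u\|_{\lX^s}^2\, c(\|u\|_{\lX^s}).
\]
Smallness of $\|u\|_{\lX^s}$ absorbs the last term into the left, admissibility $a_j \gtrsim 2^{-\delta j}$ absorbs the middle term into $a_j \|u\|_{\lX^s}$, and the equivalence $\|u\|_{\lX^s}\sim \|u_0\|_{l^1 H^s}$ (the lower bound coming from the $L^\infty L^2$ component of the $X_j$ norm evaluated at $t=0$) delivers $b_j \lesssim a_j$, as required.

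The main obstacle is ensuring the nonlinear bound on $f_j$ returns the envelope of the \emph{solution} rather than of the data, which is precisely what distinguishes the sharper Proposition \ref{fe:nonlinear} from its coarser companion Proposition \ref{prop:nonlinear}; combined with the smallness hypothesis, this structure is what permits the bootstrap to close.
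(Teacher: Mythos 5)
Your proposal is correct and follows essentially the same approach as the paper: define an admissible envelope $\{b_j\}$ for $u$ via \eqref{bjdef}, bound $f_j$ through the frequency-envelope version of the nonlinear estimates (Lemma~\ref{fjest}), apply the linear local-smoothing bound to the paradifferential equation \eqref{eqn:param1}, and self-improve $b_j\lesssim a_j$ using smallness and $\|u\|_{\lX^s}\sim\|u_0\|_{l^1H^s}$. You spell out the $2^{-\delta|j-k|}$-weighted supremum step and the absorption of the $2^{-\delta j}$ baseline a bit more explicitly than the paper, but the decomposition, key lemma, and closing argument are the same.
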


\begin{proof}
Define an admissible envelope $\{b_j\}$  for $u$ in $\lX^s$
by  
\begin{equation} \label{bjdef}
b_j =  2^{-\delta j} + \| u\|_{\lX^s}^{-1}  \max_k 2^{-\delta |j-k|}\| S_k u\|_{\lX^s}  .
\end{equation}
% first term above is needed because of the definition of admissible f.e.
% Proposition~\ref{p:lin1} only applies to such f.e.
We estimate $u_j = S_j u$ using Proposition~\ref{p:lin1}
applied to the equation \eqref{eqn:param1}. For the functions $f_j$ we 
use Lemma~\ref{fjest} to obtain
\begin{equation}\label{fjbda}
       \|f_j\|_{\lY^s}    \lesssim b_j \|u\|^2_{\lX^s}  c(\|u\|_{\lX^s}).
\end{equation}
By Proposition~\ref{p:lin1}
applied to the equation \eqref{eqn:param1} we obtain
\[
\| S_j u\|_{\lX^s} \lesssim a_j\|u_0\|_{l^1 H^s} + b_j \|u\|_{\lX^s}^2  c(\|u\|_{\lX^s}).
\]
This implies that
\[
b_j \lesssim a_j \|u_0\|_{l^1 H^s}\|u\|_{\lX^s}^{-1}  + b_j \|u\|_{\lX^s}  c(\|u\|_{\lX^s}).
\]
Since $\|u\|_{\lX^s} $ is small and $\|u_0\|_{l^1 H^s} \lesssim
\|u\|_{\lX^s}^{-1}$, this implies that $b_j \lesssim a_j$, concluding
the proof.
\end{proof}

\subsection{ Continuous dependence on the initial data}
\label{sec:contdep}
Here we show that the map $u_0 \to u$ is continuous from $l^1 H^s$
into $\lX^s$.  

Suppose that  $u_0^{(n)} \to u_0$ in $l^1 H^s_x$. Denote by $a^{(n)}_j$, respectively
$a_j$ the frequency envelopes associated to $u_0^{(n)}$, respectively
$u_0$, given by \eqref{freqEnv}. 
If $u_0^{(n)} \to u_0$ in $l^1 H^s_x$ then $a^{(n)}_j \to a_j$ in $l^2$.
Then for each $\epsilon > 0$ we can find  some  $N_\epsilon$ so that 
\[
\| a^{(n)}_{> N_\epsilon} \|_{l^2} \leq \epsilon \qquad \text{for all $n$}.
\]
By Proposition~\ref{envelopes} we conclude that
\begin{equation}\label{hf-diff}
\| u^{(n)}_{> N_\epsilon} \|_{\lX^s} \leq \epsilon \qquad \text{for all $n$}.
\end{equation}

To compare $u^{(n)}$ with $u$ we use \eqref{weak-lip} for low frequencies 
and \eqref{hf-diff} for the high frequencies,
\[
\begin{split}
\| u^{(n)} - u\|_{\lX^s} \lesssim & \| S_{< N_\epsilon} (u^{(n)} - u)\|_{\lX^s} 
+  \| S_{> N_\epsilon} u^{(n)} \|_{\lX^s} +  \| S_{> N_\epsilon} u \|_{\lX^s} 
\\
\lesssim &  2^{N_\epsilon} \| S_{< N_\epsilon} (u^{(n)} - u)\|_{\lX^{s-1}} 
+  2 \epsilon
 \\
\lesssim &  2^{N_\epsilon} \| S_{< N_\epsilon} (u^{(n)}_0 - u_0)\|_{l^1 H^{s-1}} 
+  2 \epsilon.
\end{split}
\]
Letting $n \to \infty$ we obtain
\[
\lim \sup_{n \to \infty} \| u^{(n)} - u\|_{\lX^s} \lesssim \epsilon.
\]
Letting $\epsilon \to 0$ we obtain 
\[
\lim_{n \to \infty} \| u^{(n)} - u\|_{\lX^s} = 0,
\]
which gives the desired result.

\subsection{ Higher regularity}

Here we prove that the solution $u$ satisfies the bound
\begin{equation}
\| u\|_{\lX^\sigma} \lesssim \|u_0\|_{l^1 H^\sigma}, \qquad \sigma \geq s,
\end{equation}
whenever the right hand side is finite.

The idea is to repeatedly differentiate the equation. The simplest way
to do this would be to say that $\nabla u$ solves the linearized
equation. But this is like the difference equation and is well-posed
only in $l^1 H^{s-1}$ not in $l^1 H^s$.  Instead we redo the
computation as follows.  The original equation is
\[
(i \partial_t + \p_j g^{jk} (u)  \p_k) u =  F(u,\nabla u) .
\]
Differentiating we obtain 
\[
\begin{split}
(i \partial_t + \p_j g^{jk} (u) \p_k) (\partial_l u) = & - (g^{jk})' (u)
(\p_j \partial_l u \p_k u + \partial_l u \p_j \p_k u) \\
&+ F_{z_l} (u,\nabla
u) \nabla \partial_l u + F_{z_0} (u,\nabla u) \partial_l u.
\end{split}
\]
We write this in an abbreviated form as
\[
(i \partial_t + \p_j g^{jk} (u) \p_k) v_1 = G(u,\nabla u) \nabla v_1 + 
F_1(u,\nabla u)
\]
for $v_1 = \nabla u$, where $G(z) = O(|z|)$ and $F_1(z) = O(|z|^2)$ near $0$.
We know that $u$ is small in $\lX^s$, therefore, by Proposition~\ref{prop:nonlinear}
we get
\[
\| G(u,\nabla u)\|_{\lX^{s-1}} \ll 1, \qquad \| F_1(u,\nabla u)\|_{\lY^s}
\lesssim \|u\|^2_{\lX^s}.
\]
Hence using Proposition~\ref{p:lin1}(b) we obtain
\[
\| v_1\|_{\lX^s} \lesssim \|v_1(0)\|_{l^1 H^s} + \|u\|_{\lX^s}^2,
\]
which shows that
\[
\| u\|_{\lX^{s+1}} \lesssim \|u(0)\|_{l^1 H^{s+1}} + \|u\|_{\lX^s}^2.
\]

Inductively, we write an equation for $v_n= \nabla^n u$,
\[
(i \partial_t + \p_j g_{jk} (u) \p_k) v_n = G(u,\nabla u) \nabla v_n + 
F_n(u,\cdots,\nabla^{n} u)
\]
with the same $G$ as above. This leads to
\[
\| v_n\|_{\lX^s} \lesssim \|v_n(0)\|_{l^1 H^s} + \|u\|_{\lX^{s+n-1}}^2,
\]
which shows that
\[
\| u\|_{\lX^{s+n}} \lesssim \|u(0)\|_{l^1 H^{s+n}} + \|u\|_{\lX^{s+n-1}}^2.
\]


\begin{thebibliography}{XX}

\bibitem{Bej1} I. Bejenaru: {\em Quadratic nonlinear derivative
    Schr\"odinger equations. I}.  IMRP Int. Math. Res. Pap. {\bf
    2006}, 84pp.

\bibitem{Bej} I. Bejenaru: {\em Quadratric nonlinear derivative
    Schr\"odinger equations. II}. Trans. Amer. Math. Soc. {\bf 360}, No. 11 (2008), 5925--5957.

\bibitem{BT} I. Bejenaru and D. Tataru: {\em Large data local solutions for the
derivative NLS equation}.  J. Eur. Math. Soc. {\bf 10} (2008), 957--985.

\bibitem{BRV} O. Blasco, A. Ruiz, and L. Vega: {\em Non interpolation
    in Morrey-Campanato and block spaces}.  Ann. Scuola
  Norm. Sup. Pisa Cl. Sci. {\bf 28} (1999), 31-40.

\bibitem{Ch} H. Chihara: {\em Local existence for semilinear
    Schr\"odinger equations}.  Math.  Jap.  {\bf 42} (1995), 35--52.

\bibitem{CS} P. Constantin and J.-C. Saut: {\em Local smoothing
    properties of dispersive equations}.  J. Amer. Math. Soc. {\bf 1}
  (1989), 413--446.

\bibitem{CKS} W. Craig, T. Kappeler, and W. Strauss: {\em Microlocal
    dispersive smoothing for the Schr\"odinger equation}. Comm. Pure
  Appl. Math. {\bf 48}, No. 8 (1995), 769--860.

\bibitem{D1} S. Doi: {\em Remarks on the Cauchy problem for
    Schr\"odinger-type equations}.  Comm. Partial Differential Equations {\bf 21} (1996), 163--178.

\bibitem{Doi} S. Doi: {\em Smoothing effects for Schr\"odinger
    evolution equation and global behavior of geodesic flow}.  Math. Ann. {\bf 318} (2000), 355-389.

\bibitem{HO} N. Hayashi and T. Ozawa: {\em Remarks on nonlinear
    Schr\"odinger equations in one space dimension}.
  Differential Integral Equations {\bf 7} (1994), 453--461.

\bibitem{Hor} L. H\"ormander:  {\em Lectures on nonlinear hyperbolic
    differential equations}, {\it Mathematiques \& Applications},
  Berlin (1997).



\bibitem{Ich} W. Ichinose: {\em On $L^2$ well-posedness of the Cauchy
    problem for Schr\"odinger type equations on a Riemannian manifold
    and Maslov theory}. Duke Math. J. {\bf 56} (1988), 549-588.



\bibitem{Kato} T. Kato: {\em On the Cauchy problem for the
    (generalized) Korteweg-de Vries equation}. Advances in
  Math. Supp. Studies, Studies in Applied Math. {\bf 8} (1983), 93--128.

\bibitem{KPV2} C. E. Kenig, G. Ponce, and L. Vega: {\em Small
    solutions to nonlinear Schr\"odinger equations}.
  Ann. Inst. H. Poincar\'e Anal. Non Lin\'eaire {\bf 10} (1993), 255--288.

\bibitem{KPV3} C. E. Kenig, G. Ponce, and L. Vega: {\em Smoothing
    effects and local existence theory for the generalized nonlinear
    Schr\"odinger equations}.  Invent. Math. {\bf 134} (1998), 489--545.

\bibitem{KPV} C. E. Kenig, G. Ponce, and L. Vega: {\em The Cauchy problem for
quasi-linear Schr\"odinger equations}. Invent. Math. {\bf 158} (2004), 343-388.

\bibitem{KPRV1} C. E. Kenig, G. Ponce, C. Rolvung, and L. Vega:  {\em The
    general quasilinear ultrahyperbolic Schrödinger equation}.
  Adv. Math. {\bf 196}, No. 2 (2005), 402-433.

\bibitem{KPRV2} C. E. Kenig, G. Ponce, C. Rolvung, and L. Vega: {\em Variable
coefficient Schr\"odinger flows for ultrahyperbolic operators}. Adv. Math. {\bf 206}, No. 2 (2006), 373-486.

\bibitem{KT} H. Koch and D. Tataru: {\em Dispersive estimates for
    principally normal pseudodifferential operators}.  Comm. Pure App.
  Math. {\bf 58} (2005), 217-284.

\bibitem{KrFa} S. N. Krushkov and A. V. Faminiskii: {\em Generalized
    solutions of the Cauchy problem for the Korteweg-de Vries
    equation}.  Mat. Sh. USSR {\bf 48} (1984), 93--138.

\bibitem{LimPon} W.-K. Lim and G. Ponce: {\em On the initial value
    problem for the one dimensional quasilinear Schr\"odinger
    equation}.  SIAM J. Math. Anal. {\bf 34} (2003), 435--459.

\bibitem{LinPon} F. Linares and G. Ponce: {\em Introduction to
    nonlinear dispersive equations}. {\it Universitext}, Springer, New
  York (2009).

\bibitem{MMT1} J. Marzuola, J. Metcalfe, and D. Tataru: {\em Wave packet
parametrices for evolutions governed by pdo's with rough
symbols}. Proc. Amer. Math. Soc. {\bf 136}, No. 2 (2007), 597-604.

\bibitem{MMT2} J. Marzuola, J. Metcalfe, and D. Tataru: {\em Strichartz estimates
and local smoothing estimates for asymptotically flat Schr\"odinger equations}.
J. Funct. Anal. {\bf 255}, No. 6 (2008), 1497-1553.

\bibitem{Miz1} S. Mizohata: {\em Some remarks on the Cauchy problem}.  J. Math.
Kyoto Univ. {\bf 1} (1961), 109-127.

\bibitem{Miz2} S. Mizohata: {\em Sur quelques equations du type Schr\"odinger}.
Journees Equations aux derivees partielles, Saint-Jean de Monts, 1981.

\bibitem{Miz3} S. Mizohata: {\em On the Cauchy Problem}. {\it Notes and Reports in Mathematics in Science and Engineering}, {\bf 3}, Science Press and Academic Press (1985).

\bibitem{Sch} T. Schottdorf: {\em Ill-posedness for the quadratic D-NLS equation}. Master's
Thesis, University of California at Berkeley (2010).

\bibitem{Sj} P. Sj\"olin: {\em Regularity of solutions to the
    Schr\"odinger equations}.  Duke Math. J. {\bf 55} (1987), 699--715.

\bibitem{Sog} C. D. Sogge: {\em Lectures on nonlinear wave equations},
  {\it Monographs in Analysis, II}. International Press, Boston (1995).

\bibitem{lp} D. Tataru: {\em Strichartz estimates for second order hyperbolic
operators with nonsmooth coefficients III}.  J. Amer. Math. Soc. {\bf 15}
(2002), 419-442.

\bibitem{T} D. Tataru: {\em Phase space transforms and microlocal
 analysis}. Phase space analysis of partial differential equations.
  Vol. II.  Pubbl. Cent. Ric. Mat. Ennio Giorgi, Scuola Norm. Sup.
  Pisa (2004), 505-524.

\bibitem{T-WM} D. Tataru: {\em Rough solutions for the wave maps
    equation}.  Amer. J. Math. {\bf 127}
(2005), 293-377.

\bibitem{Tat} D. Tataru: {\em Parametrices and dispersive estimates
    for Schr\"odinger operators with variable coefficients}.
  Amer. J. Math. {\bf 130} (2008), 571-634.

\bibitem{VV} A. Vargas and L. Vega: {\em Global wellposedness for 1D
    non-linear Schr\"odinger equation for data with an infinite $L^2$
    norm}.  J. Math. Pures Appl. {\bf 80} (2001), 1029--1044.

\bibitem{Vega} L. Vega: {\em The Schr\"odinger equation: pointwise
    convergence to the initial data}.  Proc. Amer. Math. Soc. {\bf
    102} (1988), 874--878.
\end{thebibliography}
\end{document}